\newtheorem{thm}{Theorem}[section]
\newtheorem{lem}[thm]{Lemma}
\newtheorem{rem}[thm]{Remark}
\theoremstyle{definition}
\newtheorem{defn}{Definition}[section]
\newcommand{\scr}[1]{\mathscr #1}
\definecolor{wco}{rgb}{0.5,0.2,0.3}
\numberwithin{equation}{section} \theoremstyle{remark}
\newcommand{\ua}{\uparrow}
\title{{\bf Distribution Dependent SDEs with H\"{o}lder Continuous Drift and $\alpha$-Stable Noise}\footnote{Supported in
 part by  NNSFC (11801406).} }
\author{
{\bf     Xing Huang $^{1,a)}$, Fen-Fen Yang $^{1,b)}$   }\\
\footnotesize{  1)Center for Applied Mathematics, Tianjin University, Tianjin 300072, China}\\
\footnotesize{ $^{a)}$xinghuang@tju.edu.cn, $^{b)}$yangfenfen@tju.edu.cn}
}
\begin{document}
\allowdisplaybreaks
\def\R{\mathbb R}  \def\ff{\frac} \def\ss{\sqrt} \def\B{\mathbf
B}
\def\N{\mathbb N} \def\kk{\kappa} \def\m{{\bf m}}
\def\ee{\varepsilon}\def\ddd{D^*}
\def\dd{\delta} \def\DD{\Delta} \def\vv{\varepsilon} \def\rr{\rho}
\def\<{\langle} \def\>{\rangle} \def\GG{\Gamma} \def\gg{\gamma}
  \def\nn{\nabla} \def\pp{\partial} \def\E{\mathbb E}
\def\d{\text{\rm{d}}} \def\bb{\beta} \def\aa{\alpha} \def\D{\scr D}
  \def\si{\sigma} \def\ess{\text{\rm{ess}}}
\def\beg{\begin} \def\beq{\begin{equation}}  \def\F{\scr F}
\def\Ric{\text{\rm{Ric}}} \def\Hess{\text{\rm{Hess}}}
\def\e{\text{\rm{e}}} \def\ua{\underline a} \def\OO{\Omega}  \def\oo{\omega}
 \def\tt{\tilde} \def\Ric{\text{\rm{Ric}}}
\def\cut{\text{\rm{cut}}} \def\P{\mathbb P} \def\ifn{I_n(f^{\bigotimes n})}
\def\C{\scr C}   \def\G{\scr G}   \def\aaa{\mathbf{r}}     \def\r{r}
\def\gap{\text{\rm{gap}}} \def\prr{\pi_{{\bf m},\varrho}}  \def\r{\mathbf r}
\def\Z{\mathbb Z} \def\vrr{\varrho} \def\ll{\lambda}
\def\L{\scr L}\def\Tt{\tt} \def\TT{\tt}\def\II{\mathbb I}
\def\i{{\rm in}}\def\Sect{{\rm Sect}}  \def\H{\mathbb H}
\def\M{\scr M}\def\Q{\mathbb Q} \def\texto{\text{o}} \def\LL{\Lambda}
\def\Rank{{\rm Rank}} \def\B{\scr B} \def\i{{\rm i}} \def\HR{\hat{\R}^d}
\def\to{\rightarrow}\def\l{\ell}\def\iint{\int}
\def\EE{\scr E}\def\no{\nonumber}
\def\A{\scr A}\def\V{\mathbb V}\def\osc{{\rm osc}}
\def\BB{\scr B}\def\Ent{{\rm Ent}}\def\3{\triangle}
\def\U{\scr U}\def\8{\infty}\def\1{\lesssim}
\def\R{\mathbb R}  \def\ff{\frac} \def\ss{\sqrt} \def\B{\mathbf
B} \def\W{\mathbb W}
\def\N{\mathbb N} \def\kk{\kappa} \def\m{{\bf m}}
\def\ee{\varepsilon}\def\ddd{D^*}
\def\dd{\delta} \def\DD{\Delta} \def\vv{\varepsilon} \def\rr{\rho}
\def\<{\langle} \def\>{\rangle} \def\GG{\Gamma} \def\gg{\gamma}
  \def\nn{\nabla} \def\pp{\partial} \def\E{\mathbb E}
\def\d{\text{\rm{d}}} \def\bb{\beta} \def\aa{\alpha} \def\D{\scr D}
  \def\si{\sigma} \def\ess{\text{\rm{ess}}}
\def\beg{\begin} \def\beq{\begin{equation}}  \def\F{\scr F}
\def\Ric{\text{\rm{Ric}}} \def\Hess{\text{\rm{Hess}}}
\def\e{\text{\rm{e}}} \def\ua{\underline a} \def\OO{\Omega}  \def\oo{\omega}
 \def\tt{\tilde} \def\Ric{\text{\rm{Ric}}}
\def\cut{\text{\rm{cut}}} \def\P{\mathbb P} \def\ifn{I_n(f^{\bigotimes n})}
\def\C{\scr C}      \def\aaa{\mathbf{r}}     \def\r{r}
\def\gap{\text{\rm{gap}}} \def\prr{\pi_{{\bf m},\varrho}}  \def\r{\mathbf r}
\def\Z{\mathbb Z} \def\vrr{\varrho} \def\ll{\lambda}
\def\L{\scr L}\def\Tt{\tt} \def\TT{\tt}\def\II{\mathbb I}
\def\i{{\rm in}}\def\Sect{{\rm Sect}}  \def\H{\mathbb H}
\def\M{\scr M}\def\Q{\mathbb Q} \def\texto{\text{o}} \def\LL{\Lambda}
\def\Rank{{\rm Rank}} \def\B{\scr B} \def\i{{\rm i}} \def\HR{\hat{\R}^d}
\def\to{\rightarrow}\def\l{\ell}\def\iint{\int}
\def\EE{\scr E}\def\Cut{{\rm Cut}}
\def\A{\scr A} \def\Lip{{\rm Lip}}
\def\BB{\scr B}\def\Ent{{\rm Ent}}\def\L{\scr L}
\def\R{\mathbb R}  \def\ff{\frac} \def\ss{\sqrt} \def\B{\mathbf
B}
\def\N{\mathbb N} \def\kk{\kappa} \def\m{{\bf m}}
\def\dd{\delta} \def\DD{\Delta} \def\vv{\varepsilon} \def\rr{\rho}
\def\<{\langle} \def\>{\rangle} \def\GG{\Gamma} \def\gg{\gamma}
  \def\nn{\nabla} \def\pp{\partial} \def\E{\mathbb E}
\def\d{\text{\rm{d}}} \def\bb{\beta} \def\aa{\alpha} \def\D{\scr D}
  \def\si{\sigma} \def\ess{\text{\rm{ess}}}
\def\beg{\begin} \def\beq{\begin{equation}}  \def\F{\scr F}
\def\Ric{\text{\rm{Ric}}} \def\Hess{\text{\rm{Hess}}}
\def\e{\text{\rm{e}}} \def\ua{\underline a} \def\OO{\Omega}  \def\oo{\omega}
 \def\tt{\tilde} \def\Ric{\text{\rm{Ric}}}
\def\cut{\text{\rm{cut}}} \def\P{\mathbb P} \def\ifn{I_n(f^{\bigotimes n})}
\def\C{\scr C}      \def\aaa{\mathbf{r}}     \def\r{r}
\def\gap{\text{\rm{gap}}} \def\prr{\pi_{{\bf m},\varrho}}  \def\r{\mathbf r}
\def\Z{\mathbb Z} \def\vrr{\varrho} \def\ll{\lambda}
\def\L{\scr L}\def\Tt{\tt} \def\TT{\tt}\def\II{\mathbb I}
\def\i{{\rm in}}\def\Sect{{\rm Sect}}  \def\H{\mathbb H}
\def\M{\scr M}\def\Q{\mathbb Q} \def\texto{\text{o}} \def\LL{\Lambda}
\def\Rank{{\rm Rank}} \def\B{\scr B} \def\i{{\rm i}} \def\HR{\hat{\R}^d}
\def\to{\rightarrow}\def\l{\ell}
\def\8{\infty}\def\I{1}\def\U{\scr U}
\maketitle

\begin{abstract} In this paper, the existence and uniqueness of the distribution dependent SDEs with H\"{o}lder continuous drift driven by $\alpha$-stable process is investigated. Moreover, by using Zvonkin type transformation, the convergence rate of Euler-Maruyama method and propagation of chaos are also obtained. The results cover the ones in the case of distribution independent SDEs.
\end{abstract} \noindent
 AMS subject Classification: 60H35, 41A25, 60H10, 60C30.   \\
\noindent
 Keywords:  Distribution dependent SDEs, H\"{o}lder continuous, Zvonkin type transformation, Euler-Maruyama method, $\alpha$-stable process.

\section{Introduction}
Distribution dependent stochastic differential equations (SDEs for abbreviation), also called McKean-Vlasov SDE, can be used to characterize  nonlinear Fokker-Planck equations (see \cite{Mc,BLPR}). Recently, there are many results on Distribution dependent SDEs
(see \cite{MV,SZ} and references within).
Existence and uniqueness of McKean-Vlasov SDEs with regular
coefficients have been investigated extensively (see e.g.
\cite{BMP,CD,MV,SZ, Wangb}). Meanwhile, the strong wellposedness of
McKean-Vlasov SDEs with irregular coefficients has also received
much attention (see, for example, \cite{Ch,HW,RZ}, where, in \cite{Ch},
the dependence of laws  is of integral type and the diffusion is
non-degenerate, and \cite{HW} is concerned with the integrability
condition but excluding   linear growth of the drift). For weak
wellposedness of McKean-Vlasov SDEs, we refer to e.g.
\cite{HW,LM,MS,MV}. \cite{RW,W19} studied the Lion's derivative and ergodicity for  SDEs driven by Brownian motion. \cite{So} investigated the derivative formula and gradient estimate for McKean-Vlasov SDEs driven by jump process (See \cite{ABRS,BLM,CM,DEG,EGZ} for more results on McKean-Vlasov SDEs).

Recently, the convergence rate of Euler-Maruyama (EM for short) method for SDEs with irregular coefficients has attracted much attention. For instance, \cite{GR,Y} revealed the convergence rate in $L^1$ and $L^p$-norm sense for a range of SDEs, where the drift term is Lipschitzian and the diffusion term is H\"{o}lder continuous with respect to spatial variable. In addition, by using the Yamada-Watanabe approximation and heat kernel estimate, \cite{NT2} studied the strong convergence rate for a class of non-degenerate SDEs with bounded drift term satisfies weak monotonicity and is of bounded variation with respect to Gaussian measure and the diffusion term is H\"{o}lder continuous.

Quite recently, by Zvonkin transformation \cite{ZV}, the convergence rate of EM method for the SDEs with singular drift are investigated extensively. For instance, \cite{BHY} discussed the case with Dini continuous drifts;
\cite{PT} obtained the strong convergence rate of EM method with bounded H\"{o}lder continuous drift driven by truncated symmetric $\alpha$-stable process, see also \cite{MX}  and \cite{HL} for the  symmetric $\alpha$-stable process.
 As to the distribution dependent SDEs, \cite{Z2} proved the convergence of the EM scheme under linear growth condition by a discretized version of Krylov's estimate. \cite{BH} extended the results of \cite{PT} and \cite{GR} to the distribution dependent SDEs driven by Brownian motion.

In this paper, we investigate the existence and uniqueness of distribution dependent SDEs with bounded and H\"{o}lder continuous drifts, where the noise is $\alpha$-stable process. Due to the distribution dependence, we adopt an approximation technique constructing by a sequence of classical SDEs and Skorohod representation theorem to prove the existence of the weak solution. As to the pathwise uniqueness, we still use the Zvonkin transform which depend on the distribution of one solution to make two solutions be regular ones.

Since the SDE is distribution-dependent, we exploit the
stochastic interacting particle systems to approximate it. We will apply a common Zvonkin's transform depending on the distribution of the real solution to make the numerical SDE and interacting particle systems be regular ones, from which the strong convergence rate is obtained.

The paper is organized as follows. In Section 2, we recall some preliminaries on symmetric $\alpha$-stable process and the Poisson random measure. In Section 3, the existence and uniqueness  for  the distribution dependent SDEs with H\"{o}lder continuous drift driven by $\alpha$-stable process are established. Finally,  by using Zvonkin type transformation, the convergence rate of EM Scheme for SDEs are investigated in Section 4.
\section{Some Preparations}
\subsection{Symmetric $\alpha$-stable process}
Before moving on, we firstly recall some knowledge on symmetric $\alpha$-stable process and the Poisson random measure  (see \cite{HL,K,L,P} for more details). Recall that a $\mathbb{R}^{d}$-valued L\'{e}vy process $L_t$ is called $d$-dimensional symmetric $\alpha$-stable process if the L\'{e}vy symbol $\Psi$ has the following representation:
\beg{equation*}
\Psi(u)=\int_{\mathbb{R}^{d}}[1-\cos\langle u, x\rangle]\nu(\d x),
\end{equation*}
where
\beg{equation*}
\nu(D)=\int_{S}\mu(\d \xi)\int_{0}^{\infty} \mathds{1}_{D}(r\xi)\frac{\d r}{r^{1+\alpha}}, \quad D\in\B(\mathbb{R}^{d}),
\end{equation*}
$\alpha\in(0,2)$, $S=\{x\in\mathbb{R}^{d}, |x|=1\}$ and $\mu$ is a finite symmetric measure on $(S,\B(S))$, i.e. $\mu(A)=\mu(-A)$, for any $A\in \B(S)$.

%It is easy to see that the L\'{e}vy process $L_t$ has the following two properties:
%\beg{enumerate}
%\item[({\bf{I}})] Scaling property: for $t>0$, let $\mu_t$ denotes the law of $L_t$, $t>0$, then
%\begin{equation*}
%\mu_t(A) = \mu_1(t^{-\frac{1}{\alpha}}A), \ \ A\in\B(\mathbb{R}^{d}), t > 0.
%\end{equation*}
%\item[({\bf{II}})] For any $\gamma>\alpha$, it holds that
%\begin{equation*}
%\int_{\{|x|\leq 1\}}|x|^{\gamma}\nu(\d x)<\infty.
%\end{equation*}
%\end{enumerate}
The Poisson random measure $N$ associated to $L$ is defined as follows:
\beg{equation*}
N([0,t], U)=\sum_{0\leq s\leq t} \mathds{1}_{U}(\Delta L_s), \quad U\in\B\left(\mathbb{R}^{d}\backslash\{0\}\right), t\geq 0.
\end{equation*}
Here $\Delta L(s)=L_s-L_{s-}$ denotes the jump size of $L$ at time $s\geq0$. The compensated Poisson random measure $\tilde{N}$ is defined by
\beg{equation*}
\tilde{N}([0,t], U)=N([0,t], U)-t\nu(U), \quad U\in\B\left(\mathbb{R}^{d}\backslash\{0\}\right), 0\notin\bar{U}, t\geq 0.
\end{equation*}
It follows from the L\'{e}vy-It\^{o} decomposition that
\beg{equation*}
L_t=\int_{0}^{t}\int_{|x|\leq 1}x\tilde{N}(\d s, \d x)+\int_{0}^{t}\int_{|x|> 1}x N(\d s, \d x), \quad t\geq 0.
\end{equation*}

%Convention: A constant $c$ or $C$ depending on a parameter $p$, we shall write $c(p)$ or $C(p)$, whose values may change in different places.

%In addition, for any $T>0$, the predictable $\sigma$-algebra $\mathcal{P}$ on $\Omega \times [0,T]$ is generated by all left continuous adapted processes. Letting $U\in\B\left(\mathbb{R}^{d}\backslash\{0\}\right)$, consider a $\mathcal{P}\times \B(U)$-measurable mapping $F: \Omega \times [0,T] \times U \to \mathbb{R}^{d}$. Let $Z(t)=\int_{0}^{t} \int_{U}F(\cdot,s,x)\tilde{N}(\d s,\d x)$.  We will use the following $L^{p}$-estimates (see \cite[Theorem 2.11]{K}): for any $p\geq 2$ and $t\in[0,T]$, there exists a constant $c(p)>0$ such that
%$$
%\mathbb{E}\left[\sup_{0\leq s\leq t}|Z(s)|^{p}\right] \leq c(p) \mathbb{E} \left[ \left(\int_{0}^{t} \int_{U}|F(\cdot,s,x)|^{2} \nu(\d x) \d s \right)^{\frac{p}{2}} + \int_{0}^{t}\int_{U}|F(\cdot,s,x)|^{p}\nu(\d x) \d s\right]
%$$
%Write $M(t)=|Z(t)|^2$ and $A(t)=\int_{0}^{t}\int_{U}|F(\cdot,s,x)|^{2}\nu(\d x) \d s$.
% When $p\in(0,2)$, we have
%\beg{equation}\begin{split}\label{L^p-2}
%\mathbb{E}\left[\sup_{0\leq s\leq t}|Z(s)|^{p}\right]&\leq c(p)\mathbb{E}\left[\left(\int_{0}^{t}\int_{U}|F(\cdot,s,x)|^{2}\nu(\d x) \d s\right)^{\frac{p}{2}}\right], \quad t\in[0,T].
%\end{split}\end{equation}

For convenience, we introduce some notations. Let $\|\cdot\|$ denote the operator norm for a bounded linear operator. For $k\in\mathbb{N}$ and $\beta\in(0,1)$,
denote by $C_{b}^{k+\beta}\left(\mathbb{R}^{d}\right)$ the set of $\mathbb{R}^{d}$-valued bounded functions, which have up to $k$-ordered continuous derivative and the $k$-th derivative is $\beta$ H\"{o}lder continuous. The norm is
\beg{equation*}
\|f\|_{k+\beta}:=\sum_{i=0}^{k}\sup_{x\in\mathbb{R}^{d}}\|\nabla^{i} f(x)\|+\sup_{x\neq y}\frac{\|\nabla ^{k} f(x)-\nabla ^{k} f(y)\|}{|x-y|^{\beta}},\quad  f\in C_{b}^{k+\beta}\left(\mathbb{R}^{d}\right).
\end{equation*}
In particular, $C_{b}^{0} \left(\mathbb{R}^{d}\right) $ means the set of $\mathbb{R}^{d}$-valued bounded functions, equipped the norm $\|f\|_{\infty}:=\sup_{x\in\mathbb{R}^{d}}|f(x)|$, and we usually denote $C_{b}$. Let $T>0$, for a function $f$ defined on $[0,T]\times \mathbb{R}^d$, let $\|f\|_{T,\infty}=\sup_{t\in[0,T],x\in\mathbb{R}^d}|f(t,x)|$.
\subsection{Distribution dependent SDEs}
Let $\mathscr{P}$ be the
collection of all probability measures on $\R^d$. For   $p\geq 1$, if
$\mu(|\cdot|^p):=\int_{\R^d}|x|^p\mu(\d x)<\8$, we formulate
$\mu\in\mathscr{P}_p$.  For $\mu,\bar{\mu}\in\mathscr{P}_p$,
$p\geq 1,$ the $\mathbb{W}_p$-Wasserstein distance between $\mu$ and
$\nu$ is defined by
\begin{equation*}
\mathbb{W}_p(\mu,\bar{\mu})=\inf_{\pi\in\mathcal
{C}(\mu,\bar{\mu})}\Big(\int_{\R^d\times\R^d}|x-y|^p\pi(\d x,\d y)\Big)^{\ff{1}{
p}},
\end{equation*}
where $\mathcal {C}(\mu,\bar{\mu})$ stands for the set of all couplings of
$\mu$ and $\bar{\mu}$. As for a random variable $\xi,$ its law is
written by $\mathscr{L}_\xi$, and write $\mathscr{L}_\xi|_{\P}$ as the distribution of $\xi$ under $\P$.
%For any $t\ge0,$ let $C([0,t];\R^d)$
%be the set of all continuous functions $f:[0,t]\to\R^d$ endowed with
%the uniform norm $\|f\|_{\8,t}:=\sup_{0\le s\le t}|f(s)|$.

Consider the following McKean-Vlasov SDE on $\R^d$
\beq\label{E1}
\d
X_t= b(X_t, \L_{X_t})\d t +\d L_t,~~~t\ge0.
\end{equation}
{\begin{defn}\label{def1} A c\`{a}l\`{a}g adapted process $(X_t)_{t\geq
0}$ on $\mathbb{R}^d$ is called a (strong) solution of \eqref{E1}, if
%$$\int_0^t\E|b(s,X_s,\L_{X_s})|^\theta\,\d s<\infty, \ \ t\geq 0,$$
$\P$-a.s.
\begin{align}\label{sol}
X_t=X_0+\int_0^tb(X_s,\L_{X_s})\,\d s+L_t, \quad t\ge0.
\end{align}
We call the strong uniqueness in $\scr P_{\theta}$ for some $\theta\in[1,\alpha)$, if for any $\F_0$-measurable random variable $X_0$ with $\L_{X_0}\in\scr P_{\theta}$, there exists a unique $X_t$ satisfy \eqref{sol} and $\E|X_t|^\theta<\infty$.

(2) A couple $(\tilde{X}_t
, \tilde{L}_t)_{t\geq 0}$ is called a weak solution to \eqref{sol}, if $\tilde{L}$
is a
$d$-dimensional symmetric $\alpha$-stable process with respect to a complete filtration probability space
$(\tilde{\Omega}, \{\tilde{\F}_t\}_{t\geq 0}, \tilde{\P})$, and \eqref{sol} holds for $(\tilde{X}_t
, \tilde{L}_t)_{t\geq 0}$ in place of $(X_t
, L_t)_{t\geq 0}$.

(3) \eqref{sol} is said to have weak uniqueness in $\scr P_\theta$ for some $\theta\in[1,\alpha)$, if any two weak solutions of
the equation from  common initial distribution in $\scr P_\theta$ are equal in law.
\end{defn}
Throughout this paper, we assume that
\beg{enumerate}
\item[{\bf (H1)}] For fixed $\alpha\in(1,2)$, there exists a positive constant $C_{\alpha}>0$ such that
\beg{equation*}
\Psi(u)\geq C_{\alpha}|u|^{\alpha}, \quad u\in\mathbb{R}^{d}.
\end{equation*}
\end{enumerate}
\beg{enumerate}
\item[{\bf (H2)}] $\|b\|_\infty:=\sup_{x\in\R^d,\mu\in\scr P}|b(x,\mu)|<\infty$, and there exists constants $\beta\in(0,1)$ satisfying $2\beta+\alpha>2$, $K>0$ and $\kappa\in[1,\alpha)$ such that
\begin{align}\label{Holder}
|b(x,\mu)-b(y,\bar{\mu})|\leq K(\mathbb{W}_\kappa(\mu,\bar{\mu})+|x-y|^\beta),\ \ \mu,\bar{\mu}\in\scr P_\kappa, x,y\in\mathbb{R}^d.
\end{align}
\end{enumerate}
See \cite[Remark 1.1]{HL} for examples such that {\bf(H1)} holds.

%\begin{rem}\label{Rem1}
%By BDG's inequality,
%Definition \ref{def1} yields
%$\E\Big(\sup_{0\le
%s\le t}|X_s|\Big)<\infty$
%$\E|X_s|<\infty$ if $\E|X_0|<\infty$.
%\end{rem}
\section{Existence and uniqueness}
\subsection{Weak Solution}
We will use the tightness and Skorohod representation theorem to prove the weak existence. The idea of the proof of the following theorem comes from \cite[Proof of Theorem 4.1]{Z1}  (see also \cite[Proof of Theorem 4.7]{J} and \cite{HW,MV} for the case with Gaussian noise).
\begin{thm}\label{ws} Assume that $b$ is bounded measurable. Then for any $\mu_0\in\scr P_\kappa$, %$\F_0$-measurable random variable $X_0$,
\eqref{E1} has a weak solution with initial distribution $\mu_0$.
\end{thm}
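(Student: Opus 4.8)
The plan is to approximate \eqref{E1} by a sequence of classical (distribution-free) SDEs obtained by freezing the measure argument on a shrinking time grid, to establish uniform moment and tightness estimates for the resulting family, and then to extract a weak solution of \eqref{E1} via the Skorohod representation theorem. Fix a symmetric $\alpha$-stable process $L$ and an $\F_0$-measurable $X_0$ with $\L_{X_0}=\mu_0\in\scr P_\kappa$. For $n\ge1$ write $\eta_n(s)=\lfloor ns\rfloor/n$ and construct $X^n$ inductively over the intervals $[k/n,(k+1)/n)$, $k\ge0$, as a solution of
\begin{align*}
\d X^n_t=b\big(X^n_t,\L_{X^n_{\eta_n(t)}}\big)\,\d t+\d L_t,\qquad X^n_0=X_0 .
\end{align*}
On each such interval the law $\L_{X^n_{\eta_n(t)}}$ has already been constructed and is a fixed probability measure, so $b(\cdot,\L_{X^n_{\eta_n(t)}})$ is a fixed bounded (under {\bf(H2)}, H\"older) drift and the equation is a classical SDE with a non-degenerate $\alpha$-stable noise, for which a (strong) solution exists; concatenating the pieces yields a c\`adl\`ag process $X^n$ on $[0,\8)$ driven by $L$.

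Since $\|b\|_\8<\8$, the finite-variation part $t\mapsto\int_0^tb(X^n_s,\L_{X^n_{\eta_n(s)}})\,\d s$ is $\|b\|_\8$-Lipschitz in $t$, uniformly in $n$, and since $\kappa<\alpha$ one has $\E\sup_{t\le T}|L_t|^\kappa<\8$ for every $T$; hence $\sup_n\E\sup_{t\le T}|X^n_t|^\kappa<\8$, and the laws of the pairs $(X^n,L)$ on the Skorohod space $D([0,\8);\R^{2d})$ form a tight family — the $L$-marginal is fixed, and the $X^n$-marginal is that fixed tight process perturbed by processes that are Lipschitz in time uniformly in $n$, so tightness (e.g. by Aldous' criterion together with the moment bound) holds. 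By Prokhorov's theorem and the Skorohod representation theorem, along a subsequence there are processes $(\tt X^n,\tt L^n)$ and $(\tt X,\tt L)$ on one probability space such that $(\tt X^n,\tt L^n)$ has the same law as $(X^n,L)$ for each $n$ and $(\tt X^n,\tt L^n)\to(\tt X,\tt L)$ almost surely in the Skorohod topology. Since the joint laws are preserved, each $\tt L^n$ is a symmetric $\alpha$-stable process, $\tt X^n$ solves the $n$-th equation above driven by $\tt L^n$, and — passing to the limit of the conditional characteristic functionals of the increments — $\tt L$ is again a symmetric $\alpha$-stable process with respect to the augmented filtration generated by $(\tt X,\tt L)$, while $\tt X$ is c\`adl\`ag and adapted with $\L_{\tt X_0}=\mu_0$.

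It remains to pass to the limit in the drift. Skorohod convergence, after absorbing the time-changes, gives $\tt X^n_s\to\tt X_s$, $\tt X^n_{\eta_n(s)}\to\tt X_s$ and $\tt L^n_s\to\tt L_s$ for every $s$ outside the (at most countable, hence Lebesgue-null) set of discontinuity times of $\tt X$ and $\tt L$. The bound $\sup_n\sup_{t\le T}\E|\tt X^n_t|^\kappa<\8$ makes $\{|\tt X^n_{\eta_n(s)}|^\kappa\}_n$ uniformly integrable, so $\mathbb W_\kappa\big(\L_{\tt X^n_{\eta_n(s)}},\L_{\tt X_s}\big)\to0$ for a.e.\ $s$; combined with the continuity of $b$ in its first variable and the $\mathbb W_\kappa$-Lipschitz property in {\bf(H2)} this yields $b\big(\tt X^n_s,\L_{\tt X^n_{\eta_n(s)}}\big)\to b\big(\tt X_s,\L_{\tt X_s}\big)$ for a.e.\ $s$, and since $\|b\|_\8<\8$ dominated convergence gives $\int_0^tb(\tt X^n_s,\L_{\tt X^n_{\eta_n(s)}})\,\d s\to\int_0^tb(\tt X_s,\L_{\tt X_s})\,\d s$ a.s. Letting $n\to\8$ in the equation for $\tt X^n$ shows that $(\tt X,\tt L)$ satisfies \eqref{sol}, i.e. is the desired weak solution with initial distribution $\mu_0$. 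The main obstacle is precisely this last step: the Skorohod topology delivers pointwise convergence only away from a null set of times and only after a random reparametrisation, so care is needed to control $\tt X^n_s$ and $\tt X^n_{\eta_n(s)}$ simultaneously, and the $\mathbb W_\kappa$-convergence of the marginals rests on the uniform $\kappa$-th moment estimate, which is available only because $\kappa<\alpha$. (If $b$ is merely bounded measurable rather than continuous, the pointwise identification of the drift should be replaced by a Krylov-type occupation-time estimate for the non-degenerate $\alpha$-stable SDEs, uniform in $n$, together with mollification of $b$ in the spatial variable.)
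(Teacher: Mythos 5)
Your proposal is essentially correct under {\bf(H1)}--{\bf(H2)} but follows a genuinely different route from the paper. You approximate \eqref{E1} by freezing the measure argument on a time grid, so that each approximating equation is a classical SDE with a fixed bounded H\"older drift, solvable by Priola's theorem; the paper instead mollifies $b$ in the spatial variable, keeps the full McKean--Vlasov structure at each approximation level (solving each mollified equation by distribution iteration as in \cite{Wangb}), and then runs the same tightness/Skorohod argument. The decisive difference is in the identification of the limit drift: you rely on continuity of $b$ in $x$ (available under {\bf(H2)}) together with $\W_\kappa$-convergence of the marginal laws, whereas the paper uses Krylov's estimate from \cite[Theorem 3.1]{Z1} precisely to avoid any spatial continuity of $b$ --- this is what allows the theorem to be stated for bounded \emph{measurable} $b$, since the mollifications $b^m$ converge to $b$ only almost everywhere. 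You correctly flag this and name the right fix (a Krylov-type occupation estimate plus spatial mollification), which is exactly the paper's device; note, however, that in the merely measurable case the construction of your frozen-measure approximants would also need revisiting (weak rather than strong solvability of each frozen SDE, with a concatenation argument), whereas the paper's mollified drifts are Lipschitz and pose no such issue. Two small points: uniform integrability of $|\tt X^n_s|^\kappa$ requires a uniform moment of some order $\kappa'\in(\kappa,\alpha)$, not merely of order $\kappa$ --- this is available since $\kappa<\alpha$ and $b$ is bounded; and both your argument and the paper's in fact use the $\W_\kappa$-Lipschitz dependence of $b$ on the measure from {\bf(H2)}, so the hypothesis ``$b$ bounded measurable'' in the statement should be read as supplemented by that part of {\bf(H2)}.
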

\begin{proof}
Let $0\le \rr\in C_0^\infty(\R^d)$ with support contained in $\{x: |x|\le 1\}$ such that $\int_{\R^d} \rr(x)\d x=1.$
For any $n\ge 1$, let $\rr_n(x)= n^{d} \rr(nx)$ and define
\begin{equation}\begin{split}\label{approx}
&b^n(x,\mu)=\int_{\R^d} b(x',\mu) \rr_n (x-x')\d x',\ \ (x,\mu)\in \R^d\times\scr P.
\end{split}\end{equation}
The by {\bf(H2)}, for any $n\geq 1$, there exists a constant $C_n>0$ such that
$$|b^n(x,\mu)-b^n(y,\bar{\mu})|\leq C_n(|x-y|+\W_\kappa(\mu,\bar{\mu})), \ \ (x,\mu), (y,\bar{\mu})\in \R^d\times\scr P_\kappa.$$
Moreover, it holds
\begin{align}\label{bnc}
|b^n(x,\mu)-b(x,\bar{\mu})|&\leq|b^n(x,\mu)-b^n(x,\bar{\mu})|+|b^n(x,\bar{\mu})-b(x,\bar{\mu})|\\ \nonumber
&\leq K\W_\kappa(\mu,\bar{\mu}))+|b^n(x,\bar{\mu})-b(x,\bar{\mu})|.
\end{align}
For any $n\geq 1$, define
\begin{align}\label{Yn}
\d X^{n}_t=b^n(X^{n}_t,\L_{X^{n}_t})\d t+\d L_t,
\end{align}
with $\L_{X^{n}_0}=\mu_0$.
Then use a distribution iteration method as in the case with Gaussian noise (\cite{Wangb}), it is not difficult to see that \eqref{Yn} has a solution $\{X^{n}\}_{n\geq 1}$ on $[0,T]$ with $\L_{X^n_t}\in \scr P_\kappa$.

Let $\tilde{b}^{n}_t(x)=b(x,\L_{X^{n}_t})$. Then \eqref{Yn} can be rewritten as
\begin{align}\label{Yn0}
\d X^{n}_t=\tilde{b}^n_t(X^{n}_t)\d t+\d L_t.
\end{align}

Let $\mathbb{D}$ be the space of all $\mathbb{R}^d$-valued c\`{a}l\`{a}g functions on $[0,T]$ equipped with the Skorohod topology such that $\mathbb{D}$ is a Polish space.
Set $$H_s^n=\int_0^s\tilde{b}^n_t(X^{n}_t)\d t,\ \ s\in[0,T].$$
Since $b$ is bounded, it is clear that
$$\sup_{s\in[0,T]}|H_s^n|\leq T\|b\|_{\infty}$$
for any $n\geq 1$. Moreover, for any $\varepsilon>0$ and bounded stopping time $\tau$
$$|H_{t\wedge\tau}^n-H^n_{t\wedge(\tau+\varepsilon)}|\leq \varepsilon\|b\|_{\infty},\ \ t\in[0,T].$$
Thus, $\{H^n_\cdot\}_{n\geq 1}$ in $\mathbb{D}$ is tight, and so does $\{H^n_\cdot,L_\cdot\}_{n\geq 1}$. So there exists a subsequence still denoted by $\{H^n_\cdot,L_\cdot\}_{n\geq 1}$ such that the distribution of $\{H^n_\cdot,L_\cdot\}_{n\geq 1}$ is weakly convergent in $\mathbb{D}\times \mathbb{D}$, which implies weak convergence of the distribution of $\{X^{n}_\cdot,L_\cdot\}_{n\geq 1}$ in $\mathbb{D}\times \mathbb{D}$. Then, by Skorohod's representation theorem,
there exists a probability space $(\tilde{\Omega},\tilde{\F}, \tilde{\P})$ and $\mathbb{D}\times \mathbb{D}$-valued stochastic processes $\{\tilde{X}^{n}_\cdot, \tilde{L}^n_\cdot)$, $\{\tilde{X}_\cdot, \tilde{L}_\cdot)$ such that $\L_{(X^{n}_\cdot, L_\cdot)}|\P=\L_{(\tilde{X}^{n}_\cdot, \tilde{L}^n_\cdot)}|_{\tilde{\P}}$, and $\tilde{\P}$-a.s. $(\tilde{X}^{(n)}_\cdot, \tilde{L}^n_\cdot)$ converges to $(\tilde{X}_\cdot, \tilde{L}_\cdot)$ as $n\to\infty$, which implies that for any $t\in[0,T]$, $\L_{\tilde{X}^{n}_t}|\tilde{\P}$ weakly converges to $\L_{\tilde{X}_t}|\tilde{\P}$. In particular, $\tilde{L}$ is still a symmetric $\alpha$-stable L\'{e}vy process with respect to the complete filtration $\tilde{\F}_t=\overline{\sigma\{\tilde{X}_s, \tilde{L}_s,\ \ s\leq t\}}^{\tilde{\P}}$ and has the same symbol as $L$, and
\begin{align}\label{Yn0'}
\d \tilde{X}^{n}_t=b(\tilde{X}^{n}_t, \L_{\tilde{X}^{n}_t})\d t+\d \tilde{L}^n_t, \ \ \tilde{X}^{n}_0=\tilde{X}_0
\end{align}
with $\L_{\tilde{X}_0}|\tilde{\P}=\L_{X_0^n}|\P$. Next, we only need to take limit in \eqref{Yn0'}.

For any $n\ge m\ge 1$, we have
$$\int_{0}^s | b^n({\tilde X}^n_t,\L_{{\tilde X}^n_t})-b({\tilde X}_t,\L_{{\tilde X}_t})|\,\d t\le I_1(s)+ I_2(s)+I_3(s),$$ where
\begin{align*}
&I_1(s):= \int_{0}^s| b^n({\tilde X}^n_t,\L_{{\tilde X}^n_t})-b^{m}({\tilde X}^n_t,\L_{{\tilde X}_t})|\,\d t,\\
&I_2(s):=\int_{0}^s|b^{m}({\tilde X}^n_t,\L_{{\tilde X}_t})-b^{m}({\tilde X}_t,\L_{{\tilde X}_t})|\,\d t,\\
&I_3(s):= \int_{0}^s| b^{m}({\tilde X}_t,\L_{{\tilde X}_t})-b({\tilde X}_t,\L_{{\tilde X}_t})|\,\d t.
\end{align*}
Below we estimate these $I_i(s)$ respectively. For simplicity, let $\tilde{\mu}_t=\L_{\tilde{X}_t}$ and $\tilde{\mu}^n_t=\L_{\tilde{X}^n_t}$.

Firstly, since $\|b^n\|_\infty\leq \|b\|_{\infty}$, applying Krylov's estimate in \cite[Theorem 3.1]{Z1} and Chebyshev's inequality, we arrive at for any $p>\frac{d}{\alpha}\vee 1$ and $q>\frac{p\alpha}{p\alpha-d}$,
\begin{align*}
\tilde{\P}(\sup_{s\in[0,T]}I_1(s)\geq\frac{\varepsilon}{3})&\leq \frac{9}{\varepsilon^2}\E\int_{0}^T1_{\{|\tilde{X}^n_t|\leq R\}}| b^n(\tilde{X}^n_t,\tilde{\mu}^n_t)-b^{m}(\tilde{X}^n_t,\tilde{\mu}_t)|^2\,\d t\\
&+\frac{9}{\varepsilon^2}\E\int_{0}^T1_{\{|\tilde{X}^n_t|> R\}}| b^n(\tilde{X}^n_t,\tilde{\mu}^n_t)-b^{m}(\tilde{X}^n_t,\tilde{\mu}_t)|^2\,\d t\\
&\leq \frac{C}{\varepsilon^2}\left(\int_{0}^T\left(\int_{|x|\leq R}|b^n(x,\tilde{\mu}^n_t)-b^{m}(x,\tilde{\mu}_t)|^{2p}\d x\right)^{q/p}\d t\right)^{\frac{1}{q}}\\
&+\frac{C}{\varepsilon^2}\int_{0}^T\tilde{\P}(|\tilde{X}^n_t|> R)\d t.
\end{align*}
Since $\tilde{X}^n_t$ converges to $\tilde{X}_t$ in probability, it is clear
$$\lim_{n\to\infty} \W_\kappa(\tt\mu_t^n,\mu_t) =0,$$
and $$\lim_{n\to\infty}\tilde{\P}(|\tilde{X}^n_t|> R)\leq\tilde{\P}(|\tilde{X}_t|\geq R).$$
Then it follows from the definition of $b^n$ and \eqref{bnc} that
\begin{align*}
&\lim_{n\to\infty}|b^n(x,\tilde{\mu}^n_t)-b(x,\tilde{\mu}_t)|=0, \ \ a.e. \ \ x\in\mathbb{R}^d.
\end{align*}
So, we may apply the dominated convergence theorem to derive
\begin{equation}\begin{split}\label{I1J}
&\limsup_{n\to\infty}\tilde{\P}(\sup_{s\in[0,T]}I_1(s)\geq\frac{\varepsilon}{3})\\
&\leq \frac{C}{\varepsilon^2}\left(\int_{0}^T\left(\int_{|x|\leq R}|b(x,\tilde{\mu}_t)-b^{m}(x,\tilde{\mu}_t)|^{2p}\d x\right)^{q/p}\d t\right)^{\frac{1}{q}}\\
&+\frac{C}{\varepsilon^2}\int_{0}^T\tilde{\P}(|\tilde{X}_t|\geq R)\d t.
\end{split}\end{equation}
Since $b^{m}$ is bounded and continuous, it follows that
\begin{align*}
&\limsup_{n\to\infty}\tilde{\P}\Big(\sup_{s\in[0,T]}I_2(s)\geq\frac{\varepsilon}{3}\Big)\leq\limsup_{n\to\infty}\frac{3}{\varepsilon}
\E\int_{0}^T|b^{m}(\tilde{X}^n_t,\L_{\tilde{X}_t})-b^{m}(\tilde{X}_t,\L_{\tilde{X}_t})|\,\d t=0.
\end{align*}

Finally, since $\tt X^n_t\to \tt X_t$ in probability, the Krylov's estimate in \cite[Theorem 3.1]{Z1} also holds for $\tt X$ replacing $\tt X^n$. Therefore, inequality \eqref{I1J} holds for $I_3$ replacing $I_1$. In conclusion, we arrive at
\begin{align*}
&\limsup_{n\to\infty}\tilde{\P}\Big(\sup_{s\in[0,T]}\int_{0}^s| b^n(\tilde{X}^n_t,\L_{\tilde{X}^n_t})-b(\tilde{X}_t,\L_{\tilde{X}_t})|\,\d t\geq\varepsilon\Big)\\
&\leq \limsup_{n\to\infty}\sum_{i=1}^3\tilde{\P}\Big(\sup_{s\in[0,T]}I_i(s)\geq\frac{\varepsilon}{3}\Big)\\
&\leq \frac{C}{\varepsilon^2}\left(\int_{0}^T\left(\int_{|x|\leq R}|b(x,\tilde{\mu}_t)-b^{m}(x,\tilde{\mu}_t)|^{2p}\d x\right)^{q/p}\d t\right)^{\frac{1}{q}}\\
&+\frac{C}{\varepsilon^2}\int_{0}^T\tilde{\P}(|\tilde{X}_t|\geq R)\d t
\end{align*}
for any $m>0$ and $R>0$. Then letting first $m\to\infty$ and then $R\to\infty$, we obtain from the dominated convergence theorem that
\begin{align*}
&\limsup_{n\to\infty}\tilde{\P}\Big(\sup_{s\in[0,T]}\int_{0}^s| b^n(\tilde{X}^n_t,\L_{\tilde{X}^n_t})-b(\tilde{X}_t,\L_{\tilde{X}_t})|\,\d t\geq\varepsilon\Big)=0.
\end{align*}
Finally, letting $n$ go to infinity in \eqref{Yn0'}, we have
\begin{align}\label{Xs}
\d \tilde{X}_t=b(\tilde{X}_t, \L_{\tilde{X}_t}|_{\tilde{\P}})\d t+\d \tilde{L}_t.
\end{align}
Thus, $(\tilde{X}, \tilde{L})$ is a weak solution to \eqref{E1}.
\end{proof}
\begin{rem}\label{KIn} Since the Krylov's estimate also holds under some integrable condition on the drift in \cite{Z1}, the existence of weak solution can be proved when $b$ satisfies some integrable condition. We only consider the bounded measurable drift in this paper since the convergence rate of Euler-Maruyama method in  Section 4 can not been obtained under integrable condition.
\end{rem}
\begin{thm}\label{uwo} Assume {\bf(H1)}-{\bf(H2)}. Then \eqref{E1} has weak uniqueness in $\scr P_\kappa$.
\end{thm}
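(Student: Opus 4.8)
The plan is to remove the law-dependence of the drift by freezing the time-marginal, so that each weak solution solves a classical ($\alpha$-stable, distribution-free) SDE with bounded H\"older drift, and then to compare two solutions through a Zvonkin transformation built from the marginal flow of one of them. Let $(X_t,L_t)$ and $(\hat X_t,\hat L_t)$ be two weak solutions of \eqref{E1} with common initial law $\mu_0\in\scr P_\kappa$, and set $\mu_t=\L_{X_t}$ and $\bar\mu_t=\L_{\hat X_t}$. Since $b$ is bounded and a symmetric $\alpha$-stable process has finite moments of all orders below $\alpha>\kappa$, one first checks $\sup_{t\in[0,T]}\E|X_t|^\kappa<\8$ and likewise for $\hat X$, so $\mu_t,\bar\mu_t\in\scr P_\kappa$ and \eqref{Holder} is available along both flows. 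For a fixed flow $(\mu_t)$ the equation $\d Y_t=b(Y_t,\mu_t)\,\d t+\d L_t$ has drift $(t,x)\mapsto b(x,\mu_t)$ that is bounded, measurable, and $\beta$-H\"older in $x$ uniformly in $t$; under {\bf(H1)} the wellposedness theory for $\alpha$-stable SDEs with bounded H\"older drift (see \cite{HL}) yields pathwise uniqueness, weak uniqueness and strong existence for this frozen equation, so $\L_{X_\cdot}$ is determined by $\mu_0$ and the flow $(\mu_t)$, and similarly $\L_{\hat X_\cdot}$ by $(\bar\mu_t)$. Invoking strong existence, I would realise, on the probability space carrying $(X,L)$, a process $V$ with $V_0=X_0$ solving $\d V_t=b(V_t,\bar\mu_t)\,\d t+\d L_t$; weak uniqueness of the $(\bar\mu_t)$-frozen equation then forces $\L_{V_\cdot}=\L_{\hat X_\cdot}$, hence $\L_{V_t}=\bar\mu_t$. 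Now $X$ and $V$ sit on one probability space, are driven by the same $L$, start from the same $X_0$, and carry marginal flows $(\mu_t)$ and $(\bar\mu_t)$.

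For the comparison I would use the Zvonkin corrector attached to $(\mu_t)$: for $\lambda>0$ let $u=u^{\lambda}:[0,T]\times\R^d\to\R^d$ solve $\partial_t u+\scr A u+(b(\cdot,\mu_t)\cdot\nabla)u-\lambda u=-b(\cdot,\mu_t)$, $u(T,\cdot)=0$, with $\scr A$ the generator of $L$. Under {\bf(H1)}, Schauder-type estimates for this non-local operator give $u(t,\cdot)\in C_b^{1+\gamma}$ with $\gamma=\alpha+\beta-1$ (here it is $\alpha>1$ that makes $(b(\cdot,\mu_t)\cdot\nabla)u$ of H\"older class $\beta$, consistently with the source term), together with $\|u\|_{T,\8}+\|\nabla u\|_{T,\8}\to0$ as $\lambda\to\8$; fixing $\lambda$ large so that $\|\nabla u\|_{T,\8}\le\frac12$, the map $\Theta_t:=\mathrm{id}+u(t,\cdot)$ satisfies $\frac12|x-y|\le|\Theta_t(x)-\Theta_t(y)|\le\frac32|x-y|$. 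It\^o's formula for jump processes applied to $\Theta_t(X_t)$, using the equation solved by $X$, removes the singular drift and leaves $\d\Theta_t(X_t)=\lambda u(t,X_t)\,\d t+\d(\text{martingale})$; applied to $\Theta_t(V_t)$, which carries the ``wrong'' drift $b(\cdot,\bar\mu_t)$, it leaves in addition the mismatch term $(I+\nabla u(t,V_t))\big(b(V_t,\bar\mu_t)-b(V_t,\mu_t)\big)$, bounded by $CK\,\W_\kappa(\mu_t,\bar\mu_t)$ via \eqref{Holder}. The jump of $\Delta_t:=\Theta_t(X_t)-\Theta_t(V_t)$ carried by a jump $z$ of $L$ is $J(t,z):=[u(t,X_{t-}+z)-u(t,X_{t-})]-[u(t,V_{t-}+z)-u(t,V_{t-})]$ (the increments $z$ cancel), so $|J(t,z)|\le C(1\wedge|z|^{\gamma})|\Delta_{t-}|$ because $\nabla u$ is bounded and $\gamma$-H\"older.

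I would then close a Gronwall estimate for $\phi(t):=\E|\Delta_t|^\kappa$. One has $\phi(0)=0$ (as $V_0=X_0$), $2^{-\kappa}\E|X_t-V_t|^\kappa\le\phi(t)\le(3/2)^\kappa\E|X_t-V_t|^\kappa$, and, since $(X_t,V_t)$ couples $(\mu_t,\bar\mu_t)$, $\W_\kappa(\mu_t,\bar\mu_t)\le(\E|X_t-V_t|^\kappa)^{1/\kappa}\le 2\phi(t)^{1/\kappa}$. Applying It\^o's formula to $|\Delta_t|^\kappa$ (only $C^1$-regularity of $|\cdot|^\kappa$ is needed, since $\Delta$ has no continuous martingale part) and taking expectations, the drift contribution is at most $C\phi(s)+CK\,\W_\kappa(\mu_s,\bar\mu_s)\,\phi(s)^{(\kappa-1)/\kappa}\le C'\phi(s)$, while the compensated jump contribution, estimated by means of $\big||y+w|^\kappa-|y|^\kappa-\kappa|y|^{\kappa-1}\langle\hat y,w\rangle\big|\le C(|y|\vee|w|)^{\kappa-2}|w|^2$ and $|J(s,z)|\le C(1\wedge|z|^\gamma)|\Delta_{s-}|$, is at most $C\,\phi(s)\int_{\R^d}(1\wedge|z|^{2\gamma})\,\nu(\d z)$. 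This last integral is finite precisely when $2\gamma>\alpha$, i.e. $2\beta+\alpha>2$, which is exactly the hypothesis recorded in {\bf(H2)}. Hence $\phi(t)\le C\int_0^t\phi(s)\,\d s$, so $\phi\equiv0$, i.e. $X_t=V_t$ a.s.\ for all $t$. Consequently $\mu_t=\L_{X_t}=\L_{V_t}=\bar\mu_t$, and $\L_{X_\cdot}=\L_{V_\cdot}=\L_{\hat X_\cdot}$, which is the asserted weak uniqueness in $\scr P_\kappa$.

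The step I expect to be the main obstacle is the analytic input behind the Zvonkin corrector (a Schauder estimate for $\partial_t+\scr A$ under {\bf(H1)} delivering $u\in C_b^{1+\gamma}$ with $\gamma=\alpha+\beta-1$ and arbitrarily small $C^1$-norm for large $\lambda$), together with the control of the jump term in the It\^o expansion of $|\Delta_t|^\kappa$: it is exactly the requirement $\int_{|z|\le1}|z|^{2\gamma}\,\nu(\d z)<\8$, equivalent to $2\beta+\alpha>2$, that lets the Gronwall inequality close, which is where the H\"older constraint in {\bf(H2)} is consumed. A subsidiary technical point is the use of strong existence for the frozen equation to realise the second solution on the same space with the same driving noise, so that a pathwise Gronwall argument is available at all.
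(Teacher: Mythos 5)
Your proposal is correct and follows essentially the same route as the paper: freeze the time-marginal of one solution to reduce to a classical $\alpha$-stable SDE with bounded H\"older drift, use well-posedness of that frozen equation to transfer the second solution onto one probability space with the same noise and initial value, and then conclude by a Zvonkin-transform/Gronwall comparison in which the drift mismatch is controlled by $\mathbb{W}_\kappa(\mu_t,\bar\mu_t)$ and the condition $2\beta+\alpha>2$ is consumed in the jump estimate. The only difference is cosmetic: the paper delegates the final comparison to its strong-uniqueness theorem (Theorem \ref{uso}) and works with $\E\sup_{s\le t}|\cdot|^p$ estimates, whereas you inline that argument via an It\^o formula for $|\Delta_t|^\kappa$.
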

\begin{proof} Let   $(X_t)_{t\ge 0}$ solve \eqref{E1} with $\scr L_{X_0}=\mu_0$, and let   $(\tt X_t,\tt L_t)$ on
$(\tt\OO, \{\tt\F_t\}_{t\ge 0}, \tt\P)$ be a weak solution of \eqref{E1}  such that  $\L_{X_0}|_{\P}= \L_{\tt X_0}|_{\tt\P}=\mu_0$, i.e.
$\tt X_t$ solves
\beq\label{E1'0} \d \tt X_t = b(\tt X_t, \L_{\tt X_t}|_{\tt\P})\d t + \d \tt L_{t},\ \ \ \scr L_{\tt X_0}=\mu_0.\end{equation}
 We aim to   prove   $\L_{X}|_{\P}=\L_{\tt X}|_{\tt\P}$.
  Let $\mu_t= \L_{X_t}|_{\P}$ and
$$\bar b_t(x)= b(x, \mu_t),\ \ x\in\R^d.$$
According to \cite{P}, the stochastic differential equation
\beq\label{E10} \d \bar X_t = \bar b_t(\bar X_t)\d t + \d \tt L_{t},\ \ \bar X_0= \tt X_0 \end{equation}
has a unique solution under {\bf(H1)}-{\bf(H2)}.
According to Yamada--Watanabe \cite{IW}, it also satisfies weak uniqueness. Noting that
$$\d X_t= \bar b_t( X_t)\d t +\d  L_{t},\ \ \L_{X_0}|_{\P}= \L_{\tt X_0}|_{\tt\P},$$ the weak uniqueness of \eqref{E10} implies
\beq\label{HW} \L_{\bar X}|_{\tt\P}= \L_X|_{\P}.\end{equation}
So, \eqref{E10} reduces to
$$ \d \bar X_t = b(\bar X_t, \L_{\bar X_t}|_{\tt\P})\d t + \d \tt L_{t},\ \ \bar X_0=\tt X_0.$$
By the strong uniqueness of \eqref{E1} according to Theorem \ref{uso} below,   we obtain  $\bar X=\tt X$. Therefore,  \eqref{HW} implies $\L_{\tt X}|_{\tt \P} = \L_X|_{\P}$ as wanted.
\end{proof}
\subsection{Strong Solution}
The next lemma characterize the relationship between the existence of weak and strong solution
(see \cite{HW, H1}).
\begin{lem}\label{SS} Let $(\bar\Omega, \{\bar\F_t\}_{t\geq 0},\bar\P)$ and $(\bar{X}_t,L_t)$ be a weak solution to \eqref{E1} with $\mu_t:=\L_{\bar X_t}|_{\bar\P}$. If the SDE
%\begin{align}\label{class}
%\d X(t)= B(t,X_t,\mu_t)\,\d t+b(t,X(t),\mu_t)\,\d t+ \sigma(t,X(t),\mu_t)\,\d W(t),\ \ 0\le t\le T
%\end{align}
\begin{align}\label{class}
\d X_t= b(X_t,\mu_t)\,\d t+\d L_t,\ \ 0\le t\le T
\end{align}
has a unique strong solution $X_t$ up to life time with $\L_{X_0}=\mu_0$, then   \eqref{E1} has a strong solution.
\end{lem}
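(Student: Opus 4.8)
The plan is to invoke the Yamada--Watanabe principle in the classical (distribution-free) setting, applied to the frozen-coefficient SDE \eqref{class}. The point is that once the measure flow $\mu_t = \L_{\bar X_t}|_{\bar\P}$ is fixed, \eqref{class} is an ordinary SDE with time-dependent drift $b_t(x) := b(x,\mu_t)$, and the statement reduces to a purely pathwise/weak dichotomy for such equations. First I would observe that the given weak solution $(\bar X_t, L_t)$ on $(\bar\Omega, \{\bar\F_t\}_{t\ge 0}, \bar\P)$ is, by definition of $\mu_t$, a weak solution of the frozen SDE \eqref{class} with the prescribed initial law $\mu_0$: indeed $\d \bar X_t = b(\bar X_t, \L_{\bar X_t}|_{\bar\P})\,\d t + \d L_t = b_t(\bar X_t)\,\d t + \d L_t$. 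Hence \eqref{class} admits at least one weak solution.

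Second, the hypothesis is precisely that \eqref{class} enjoys pathwise (strong) uniqueness up to the life time, with $\L_{X_0} = \mu_0$. The Yamada--Watanabe theorem (in the form valid for SDEs driven by a L\'evy / $\alpha$-stable process, cf.\ \cite{IW} and the jump-process extensions) asserts that the conjunction of \emph{existence of a weak solution} and \emph{pathwise uniqueness} implies \emph{existence of a strong solution}: there is a measurable functional $\Phi$ such that, on any stochastic basis carrying an $\F_0$-measurable $X_0$ with $\L_{X_0}=\mu_0$ and a symmetric $\alpha$-stable process $L$, the process $X_t := \Phi_t(X_0, L_{[0,t]})$ is adapted and solves \eqref{class}. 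In particular, on the original basis $(\bar\Omega, \{\bar\F_t\}_{t\ge 0}, \bar\P)$ carrying $L$, set $X_t := \Phi_t(X_0, L_{[0,t]})$ with any $\F_0$-measurable $X_0$ of law $\mu_0$; then $X$ is a strong solution of \eqref{class}.

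Third, I would close the loop back to the distribution-dependent equation \eqref{E1}. By weak uniqueness of \eqref{class} (which follows from pathwise uniqueness plus weak existence, or is contained in the same Yamada--Watanabe package), the law of this strong solution $X$ coincides with the law of the weak solution $\bar X$, so $\L_{X_t}|_{\bar\P} = \L_{\bar X_t}|_{\bar\P} = \mu_t$ for every $t$. Consequently $b(X_t, \L_{X_t}|_{\bar\P}) = b(X_t, \mu_t) = b_t(X_t)$, and therefore
\[
\d X_t = b_t(X_t)\,\d t + \d L_t = b(X_t, \L_{X_t}|_{\bar\P})\,\d t + \d L_t,
\]
which is exactly \eqref{sol}. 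Since $X$ is adapted to the (augmented) filtration generated by $X_0$ and $L$, it is a strong solution of \eqref{E1}.

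The only genuinely delicate point is the self-consistency in the third step: a priori the strong solution of the frozen equation need not reproduce the flow $\mu_t$, and it does so here only because weak uniqueness forces $\L_{X_t}|_{\bar\P} = \mu_t$. I would make sure the version of Yamada--Watanabe quoted genuinely delivers weak uniqueness (not merely strong existence) for $\alpha$-stable--driven SDEs with merely bounded measurable, time-dependent drift, and that it is compatible with the ``up to life time'' qualifier; citing \cite{IW} together with the jump-SDE references already used in the paper (e.g.\ \cite{P}, \cite{HL}) should suffice. Everything else is bookkeeping.
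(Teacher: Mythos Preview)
Your proposal is correct and follows essentially the same route as the paper: observe that $\bar X$ is a weak solution of the frozen SDE \eqref{class}, invoke Yamada--Watanabe so that strong uniqueness yields weak uniqueness, conclude $\L_{X_t}=\mu_t$ (hence non-explosion) and therefore that the strong solution of \eqref{class} solves \eqref{E1}. The paper's proof is slightly more economical in that it uses the strong solution already granted by hypothesis rather than reconstructing it via the Yamada--Watanabe functional $\Phi$, but the logical skeleton is identical.
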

\begin{proof} Since $\mu_t= \scr L_{\bar X_t}|_{\bar \P}$,   $\bar{X}_t$ is a weak solution to \eqref{class}. By Yamada-Watanabe principle, the strong uniqueness of \eqref{class} implies the weak uniqueness, so that $X_t$ is nonexplosive with    $\L_{X_t}=\mu_t, t\ge 0$. Therefore, $X_t$ is a strong solution to \eqref{E1}.
\end{proof}
\begin{rem}\label{wr} According to \cite{P}, \eqref{class} has a unique strong solution  under {\bf(H1)}-{\bf(H2)}. This together with Lemma \ref{SS} and Theorem \ref{ws} implies that \eqref{E1} has a strong solution.
\end{rem}
\begin{thm}\label{uso} Assume {\bf(H1)}-{\bf(H2)}. Let $X$ and $Y$ be two solutions to \eqref{E1} in $\scr P_\kappa$ with $X_0=Y_0$. Then $\P$-a.s. $X=Y$.
\end{thm}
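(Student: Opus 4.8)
The plan is to freeze the marginal law of one solution, use a Zvonkin-type transform built from it to regularize the drift of \emph{both} solutions simultaneously, and then close a Gr\"onwall estimate for the transformed difference.

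First I would set $\mu_t:=\L_{X_t}$, $\bar b_t(x):=b(x,\mu_t)$, $\nu_t:=\L_{Y_t}$ and $g_t(x):=b(x,\nu_t)-b(x,\mu_t)$, so that $X$ solves the classical SDE $\d X_t=\bar b_t(X_t)\,\d t+\d L_t$ and $Y$ solves $\d Y_t=\bar b_t(Y_t)\,\d t+g_t(Y_t)\,\d t+\d L_t$; by {\bf(H2)}, $\bar b$ is bounded and, uniformly in $t$, $\beta$-H\"older in $x$, while $\|g_t\|_\infty\le K\,\W_\kappa(\mu_t,\nu_t)$. A useful preliminary remark is that, because the two equations are driven by the \emph{same} $\alpha$-stable process and $X_0=Y_0$, the difference $X_t-Y_t=\int_0^t[b(X_s,\mu_s)-b(Y_s,\nu_s)]\,\d s$ has no jump part, so $\sup_{t\le T}|X_t-Y_t|\le 2\|b\|_\infty T$ $\P$-a.s.; in particular $X_t-Y_t$ has finite moments of every order, which will be convenient when trading powers in the estimates. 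Next I would invoke the resolvent/Schauder input used for the classical equation in \cite{P}: since $\alpha\in(1,2)$, $2\beta+\alpha>2$ and $\bar b\in C_b\cap C_b^{\beta}$ uniformly in $t$, for $\lambda$ large there is $u\in C([0,T];C_b^{1+\gamma}(\R^d;\R^d))$, with $\gamma:=(\alpha+\beta-1)\wedge1$ (so that $2\gamma>\alpha$), solving $\partial_t u_t+\mathcal L^{\alpha}u_t+\bar b_t\cdot\nabla u_t+\bar b_t=\lambda u_t$, $u_T=0$, where $\mathcal L^{\alpha}$ is the generator of $L$, and with $\|\nabla u\|_{T,\infty}\le\tfrac12$, so that $\Phi_t:=\mathrm{id}+u_t$ is for each $t$ a bi-Lipschitz homeomorphism, $\tfrac12|x-y|\le|\Phi_t(x)-\Phi_t(y)|\le\tfrac32|x-y|$.

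Using $\alpha>1$ one writes $L_t=\int_0^t\int_{\R^d}z\,\widetilde N(\d s,\d z)$ and applies It\^o's formula for jump processes to $\Phi_t(X_t)$ and $\Phi_t(Y_t)$; the PDE is designed precisely so that the H\"older drift $\bar b$ cancels, leaving, with $\eta_t:=\Phi_t(X_t)-\Phi_t(Y_t)$,
\[\eta_t=\lambda\int_0^t\big(u_s(X_s)-u_s(Y_s)\big)\,\d s-\int_0^t\nabla\Phi_s(Y_s)\,g_s(Y_s)\,\d s+\int_0^t\int_{\R^d}H_s(z)\,\widetilde N(\d s,\d z),\]
where $H_s(z)=\big(u_s(X_{s-}+z)-u_s(X_{s-})\big)-\big(u_s(Y_{s-}+z)-u_s(Y_{s-})\big)$ (the $z$-terms cancel). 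I would then apply It\^o--Tanaka to $|\eta_t|^{\kappa}$ and estimate $h(t):=\E|\eta_t|^{\kappa}$. The first drift term is $\le\int_0^t 2|\eta_s|\,\d s$ since $\|\nabla u\|_\infty\le\tfrac12$; the distribution term contributes, after using $\W_\kappa(\mu_s,\nu_s)^{\kappa}\le\E|X_s-Y_s|^{\kappa}\le 2^{\kappa}\E|\eta_s|^{\kappa}$ (via bi-Lipschitz of $\Phi_s$) and H\"older's inequality, a term $\le C\int_0^t h(s)\,\d s$; the compensated Poisson integral is a true martingale (using the a priori boundedness of $X-Y$), so it drops out under $\E$. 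A Gr\"onwall argument then gives $h\equiv0$, hence $\Phi_t(X_t)=\Phi_t(Y_t)$ for all $t$, and injectivity of $\Phi_t$ yields $X=Y$ $\P$-a.s.

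The main obstacle is the compensator term $\E\int_0^t\!\int_{\R^d}\big[|\eta_{s-}+H_s(z)|^{\kappa}-|\eta_{s-}|^{\kappa}-\kappa|\eta_{s-}|^{\kappa-2}\eta_{s-}\cdot H_s(z)\big]\nu(\d z)\,\d s$. From $H_s(z)=\int_0^1[\nabla u_s(X_{s-}+rz)-\nabla u_s(Y_{s-}+rz)]\cdot z\,\d r$ one has only $|H_s(z)|\lesssim|X_{s-}-Y_{s-}|^{\gamma}|z|$ for small $z$ (and $|H_s(z)|\lesssim|X_{s-}-Y_{s-}|$ always), so a crude $L^{\kappa}$ estimate of the small-jump part diverges because $\kappa<\alpha$; one must instead use the second-order cancellation in $|\cdot|^{\kappa}$ together with the H\"older regularity of $\nabla u$, splitting the L\'evy measure at $|z|\sim|X_{s-}-Y_{s-}|^{1-\gamma}$ and invoking $\int_{|z|\le1}|z|^{2}\nu(\d z)<\infty$ and $\int_{|z|\le1}|z|^{2\gamma}\nu(\d z)<\infty$ — the latter being exactly where $2\beta+\alpha>2$, equivalently $2\gamma>\alpha$, is essential — and then using the a priori bound $\sup_{t\le T}|X_t-Y_t|<\infty$ to reduce the resulting power of $|\eta_{s-}|$ to one controlled by $h(s)$. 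Matching the regularity delivered by the resolvent estimate against the singularity of the L\'evy measure of $L$ in this step is the delicate technical heart of the argument; the rest is routine.
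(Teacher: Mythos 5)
Your overall strategy coincides with the paper's: freeze $\mu_t=\mathscr{L}_{X_t}$, solve the backward resolvent equation $\partial_t u+\mathcal{L}u+\nabla_{b(\cdot,\mu_t)}u+b(\cdot,\mu_t)=\lambda u$, $u_T=0$, with $\|\nabla u\|_{T,\infty}\le\frac12$ and $\nabla u_t\in C_b^{\alpha+\beta-1}$, transform both solutions by $\Phi_t=\mathrm{id}+u_t$, control the extra drift of $Y$ through $K\,\mathbb{W}_\kappa(\mu_t,\nu_t)\le K(\mathbb{E}|X_t-Y_t|^\kappa)^{1/\kappa}$, and close with Gr\"onwall using the a priori bound $\sup_{t\le T}|X_t-Y_t|\le 2\|b\|_\infty T$. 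The only genuine divergence is the jump term: the paper estimates $\mathbb{E}\sup_{s\le t}|\int_0^s\!\int H\,\tilde N|^{p}$ by citing the maximal inequality of \cite{HL} (this is \eqref{5-Lambda_1}), whereas you propose to compute $\mathbb{E}|\eta_t|^\kappa$ by It\^o--Tanaka and bound the compensator by hand. That is a legitimate alternative, but the compensator estimate as you sketch it does not close.

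Concretely: with $|H_s(z)|\lesssim|X_{s-}-Y_{s-}|^{\gamma}|z|$ for small $z$ and $|H_s(z)|\lesssim|X_{s-}-Y_{s-}|$ otherwise, splitting the L\'evy integral at $|z|\sim|X_{s-}-Y_{s-}|^{1-\gamma}$ bounds $\int\bigl[\,|\eta+H|^\kappa-|\eta|^\kappa-\kappa|\eta|^{\kappa-2}\langle\eta,H\rangle\,\bigr]\nu(\d z)$ only by a constant times $|X_{s-}-Y_{s-}|^{\kappa-\alpha(1-\gamma)}$ (both regions produce this same exponent), which is \emph{strictly smaller} than $\kappa$ whenever $\gamma<1$. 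The a priori bound $|X_t-Y_t|\le M$ only lets you \emph{raise} an exponent ($|Z|^{a}\le M^{a-b}|Z|^{b}$ for $a\ge b$), not lower it; so you are left with $h(t)\le C\int_0^t h(s)^{1-\alpha(1-\gamma)/\kappa}\,\d s$, an Osgood-type inequality whose nonnegative solutions need not vanish (e.g.\ $h(t)=ct^{1/\theta}$ solves $h'=Ch^{1-\theta}$ with $h(0)=0$). The repair is to put the H\"older loss on $z$ instead of on $X-Y$: writing $H_s(z)=\int_0^1\langle\nabla u_s(Y_{s-}+z+r(X_{s-}-Y_{s-}))-\nabla u_s(Y_{s-}+r(X_{s-}-Y_{s-})),\,X_{s-}-Y_{s-}\rangle\,\d r$ gives $|H_s(z)|\le C(1\wedge|z|^{\gamma})\,|X_{s-}-Y_{s-}|$, whence the compensator is at most $C|X_{s-}-Y_{s-}|^{\kappa}\int(1\wedge|z|^{2\gamma})\nu(\d z)$, finite precisely because $2\gamma>\alpha$ (equivalently $2\beta+\alpha>2$), and Gr\"onwall then closes with the full power $\kappa$. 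With that single correction your argument is sound and amounts to a self-contained proof of the estimate the paper imports from \cite{HL}.
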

\begin{proof} Let $\mu_t=\L_{X_t}, \bar{\mu}_t=\L_{Y_t}, t\in [0,T].$ Then $\mu_0=\bar{\mu}_0$.
Let $$b_t^\mu(x)= b(x, \mu_t),\ \ b_t^{\bar{\mu}}(x)= b(x, \bar{\mu}_t),\ \ (t,x)\in [0,T]\times\R^d.$$  Then it holds
\beq\label{E1'}\beg{split}
&\d X_t= b^\mu_t(X_t)\,\d t+ \d L_t,\\
&\d Y_t= b_t^{\bar{\mu}}(Y_t)\d t +\d L_t.\end{split}
\end{equation}
For  $\ll>0 $, consider the following PDE for
$u^{\ll,\mu}:[0,T]\times\R^d\to\R^d$:
\begin{equation}\label{A10}
\partial_tu^{\ll,\mu}_t+\L u^{\ll,\mu}_t+\nn_{b_t^{\mu}}u^{\ll,\mu}_t+
b_t^{\mu}=\ll u^{\ll,\mu}_t,~~~u^{\ll,\mu}_T=0,
\end{equation}
where
\beg{equation}\label{L}
\L f(x)=\int_{\mathbb{R}^{d}\backslash\{0\}}\big[f(x+y)-f(x)-\langle y,\nabla f(x)\rangle\mathds{1}_{\{ |y|\leq 1\}} \big]\nu(\d y), \ \ f\in C_{c}^{\infty}(\mathbb{R}^{d}).
\end{equation}
According to  \cite[Theorem 3.4]{P}, for $\ll>0$  large
enough, \eqref{A10} has a unique solution $u^{\ll,\mu} \in C^1([0,T],C_{b}^{\alpha+\beta}\left(\mathbb{R}^{d};\R^d\right))$ with
\begin{equation}\label{A20}
\|\nn u^{\ll,\mu}\|_{T,\8}\le
\ff{1}{2},
\end{equation}
and
\beg{equation}\label{uuu}
\lambda\|u^{\ll,\mu}\|_{T,\infty}+\sup_{t\in[0,T]}\|\nabla u^{\ll,\mu}_t\|_{\alpha+\beta-1}\leq C\|b\|_{\beta}.
\end{equation}
Let $\theta^{\lambda,\mu}_t(x)=x+u^{\ll,\mu}_t(x)$. By \eqref{E1'}, \eqref{A10},  and using the It\^o formula, we derive
\begin{equation}\label{E-X}
\begin{split}
\d \theta^{\ll,\mu}_t(X_t)&=\ll u^{\ll,\mu}_t(X_t)\d
t+\d L_t \\
& \qquad +\int_{\mathbb{R}^{d}\backslash\{0\}} \left[ u^{\ll,\mu}_t\left(X_{t-}+x \right)-u^{\ll,\mu}_t\left(X_{t-} \right) \right]\tilde{N}(\d t, \d x),\\
\d
\theta^{\ll,\mu}_t(Y_t)
 &=\{\ll
u^{\ll,\mu}_t(Y_t)+ \nn\theta^{\ll,\mu}_t(b_t^{\bar{\mu}}-b_t^{\mu})(Y_t)\}\d
t+\d L_t\\
& \qquad \\
& \qquad +\int_{\mathbb{R}^{d}\backslash\{0\}} \left[ u^{\ll,\mu}_t\left(Y_{t-}+x \right)-u^{\ll,\mu}_t\left(Y_{t-}\right) \right]\tilde{N}(\d t, \d x).
 \end{split}
\end{equation}
Thus, we have
\beg{equation}\label{5-X-X^m-2}\begin{split}
|\theta^{\lambda,\mu}_t(X_t)-\theta^{\lambda,\mu}_t(Y_t)|\leq \sum_{i=1}^3\Lambda_{i}(t),
\end{split}\end{equation}
where
\beg{equation*}\begin{split}
&\Lambda_{1}(t)=\left|\int_{0}^{t}\int_{\mathbb{R}^{d}\backslash\{0\}}\big[u^{\ll,\mu}_s\left(X_{s-}+x \right)-u^{\ll,\mu}_s\left(X_{s-} \right)-u^{\ll,\mu}_s\left(Y_{s-}+x \right)+u^{\ll,\mu}_t\left(Y_{s-} \right)\big]\tilde{N}(\d s, \d x)\right|,\\
&\Lambda_{2}(t)=\int_{0}^{t}\lambda |u^{\ll,\mu}_s(X_s)-u^{\ll,\mu}_s(Y_s)|\d s,\\
%&\Lambda_{3}(t)=\left|\int_{0}^{t}\int_{|x|\leq 1}\big[u^{\ll,\mu}_s\left(X_{s-}+x \right)-u^{\ll,\mu}_s\left(X_{s-} \right)-u^{\ll,\mu}_s\left(Y_{s-}+x \right)+u^{\ll,\mu}_t\left(Y_{s-} \right)\big]\tilde{N}(\d s, \d x)\right|,\\
&\Lambda_{3}(t)=\int_{0}^{t}|\nn\theta^{\ll,\mu}_s(b_s^{\bar{\mu}}-b_s^{\mu})(Y_s)|\d s.
\end{split}\end{equation*}
Firstly, by {\bf (H2)}, \eqref{A20} and H\"{o}lder inequality, for any $p\geq \kappa$, we obtain
%\beg{equation}\begin{split}\label{3.7}
%\mathbb{E}\sup_{0\leq s\leq t}\Lambda_{4}^{p}(s)&\leq c_{1}(p)t^{p-1}\mathbb{E}\int_{0}^{t}|(I+\nabla u(X^m(s)))(b(X^m(\eta_{m}(s)))-b(X^m(s)))|^{p}\d s\\
%&\leq c_{1}(p,t,\nu)\left(\frac{T}{m}\right)^{\frac{p\beta}{\alpha}}
%\end{split}\end{equation}
\beg{equation}\begin{split}\label{5-Lambda_4}
\mathbb{E}\sup_{0\leq s\leq t}\Lambda_{3}^{p}(s)
&\leq c_{1}(p,T)\int_0^t\W_\kappa(\bar{\mu}_s,\mu_s)^{p}\d s\leq c_{1}(p,T)\int_0^t\mathbb{E}\sup_{0\leq s\leq r}|X_s-Y_s|^{p}\d r.
\end{split}\end{equation}
Similarly, we have
%\beg{equation}\begin{split}\label{3.8}
%\mathbb{E}\sup_{0\leq s\leq t}\Lambda_{2}^{p}(s)
%&\leq c_{2}(p,\lambda,t)\int_{0}^{t}\mathbb{E}\sup_{0\leq q\leq s}|X(q)-X^m(q)|^{p}\d s.
%\end{split}\end{equation}
\beg{equation}\begin{split}\label{5-Lambda_2}
\mathbb{E}\sup_{0\leq s\leq t}\Lambda_{2}^{p}(s)
&\leq c_{2}(p,\lambda,T)\int_0^t\mathbb{E}\sup_{0\leq s\leq r}|X_s-Y_s|^{p}\d r.
\end{split}\end{equation}
Finally, by \cite{HL}, for any $p\geq \kappa$, we have
\beg{equation}\begin{split}\label{5-Lambda_1}
\mathbb{E}\sup_{0\leq s\leq t}\Lambda_{1}^{p}(s)
&\leq  c_{3}(p,T,\nu,\alpha,\beta)\int_0^t\mathbb{E}\sup_{s\in[0,r]} \left| X_{s}-Y_{s} \right|^{p}\d r.
\end{split}\end{equation}
Combining formulas \eqref{5-X-X^m-2}--\eqref{5-Lambda_1} and \eqref{A20}, we get
\beg{equation*}\begin{split}
\mathbb{E}\sup_{0\leq s\leq t}|X_{s}-Y_{s} |^{p}
&\leq C\int_0^t\mathbb{E}\sup_{s\in[0,r]} \left| X_{s}-Y_{s} \right|^{p}\d r.
\end{split}\end{equation*}
Since $X-Y$ is a bounded process, Gronwall's inequality implies that $\P$-a.s. $X_t=Y_t$ for all $t\in [0,T].$
\end{proof}
 \begin{rem}\label{PDEs} Although the PDE considered in \cite{P} is elliptic, the PDE \eqref{A10} is parabolic, we can obtain \eqref{A20} and \eqref{uuu} by the same method in \cite{P} since both the elliptic and parabolic PDEs have similar probability representation.
\end{rem}
\section{The convergence rate of EM Scheme for SDEs}
In this section, we exploit the
stochastic interacting particle systems to approximate \eqref{E1}. Let
$N\ge1$ be an integer and $(X_0^i,L^i_t)_{1\le i\le N}$ be i.i.d.
copies of $(X_0,L_t).$ Consider the following stochastic
non-interacting particle systems
\begin{equation}\label{C4}
\d X_t^i=b(X_t^i,\mu_t^i)\d t+\d
L_t^i,~~~t\ge0,~~~i\in\mathcal {S}_N:=\{1,\cdots,N\}
\end{equation}
with $\mu_t^i:=\mathscr{L}_{X_t^i}$. By the weak uniqueness, we have $\mu_t=\mu_t^i, i\in\mathcal
{S}_N.$ Let $\delta_x$ be  Dirac's delta measure centered
at the point $x\in\R^d$ and $ \tt\mu_t^N $ be the empirical distribution associated
with $X_t^1,\cdots,X_t^N$, i.e.,
\begin{equation}\label{H1}
\tt\mu_t^N =\ff{1}{N}\sum_{j=1}^N\dd_{X_t^j}.
\end{equation}
Moreover, the stochastic $N$-interacting particle
systems is defined:
\begin{equation}\label{eq4}
\d X_t^{i,N}=b(X_t^{i,N},\hat\mu_t^N)\d t+\d
L_t^i,~t\ge0,~X_0^{i,N}=X_0^i,~i\in\mathcal {S}_N,
\end{equation}
where  $\hat\mu_t^N$ means the empirical distribution corresponding
to  $X_t^{1,N},\cdots,X_t^{N,N}$, namely,
\begin{equation*}
 \hat\mu_t^N :=\ff{1}{N}\sum_{j=1}^N\dd_{X_t^{j,N}}.
 \end{equation*}
We remark that particles $(X^i)_{i\in\mathcal {S}_N}$ are mutually
independent and that particles $(X^{i,N})_{i\in\mathcal {S}_N}$ are
interacting and are not independent.

Let $\lfloor
a\rfloor$ stipulates the integer part of $a\ge0.$ To discretize \eqref{eq4} in time, we introduce the continuous time
EM scheme defined as below: for any  $\dd\in(0,\e^{-1}),$
\begin{equation}\label{C3}
\d X_t^{\dd,i,N}=b(X_{t_\dd}^{\dd,i,N},\hat\mu_{t_\dd}^{\dd,N})\d
t+\d
L_t^i,~~~t\ge0,~~~X_0^{\dd,i,N}=X_0^{i,N},
\end{equation}
where $t_\dd:=\lfloor t/\dd\rfloor\dd$  and
 $$\hat\mu_{k\dd}^{\dd,N} :=\ff{1}{N}\sum_{j=1}^N\dd_{X_{k\dd}^{\dd,j,N}},~~~~k\ge0.$$

The following result states  that the continuous time EM scheme
corresponding to stochastic interacting particle systems converges
strongly to the non-interacting particle system whenever  the
particle number goes to infinity and the stepsize approaches to zero
and moreover provides the convergence rate.

\begin{thm}\label{th3}
 Assume {\bf(H1)}-{\bf(H2)} and suppose further $\mathscr{L}_{X_0}\in  \scr
P_p$ for some $p\in[\kappa,\alpha)$.  Then, for any $T>0$ and $q\in(p,\alpha)$, there exists a
constant $C>0$ depending on $p,d,q,T$ and $\sup_{t\in[0,T]}\E|X_t^i|^q$ such that
\begin{equation*}
\sup_{i\in\mathcal {S}_N}\E\Big(\sup_{0\le t\le
T}|X_t^i-X_t^{\dd,i,N}|^p\Big)\le C
\begin{cases}
\dd^{\frac{p\beta}{\aa}}+N^{-\frac{1}{2}}+N^{\frac{p}{q}-1}, ~~~~~~~~~~~~~~~p>\frac{d}{2},\ \ q\neq 2p, \\
\dd^{\frac{p\beta}{\aa}}+N^{-\frac{1}{2}}\log (1+N)+N^{\frac{p}{q}-1}, ~p=\frac{d}{2},\ \ q\neq 2p\\
\dd^{\frac{p\beta}{\aa}}+N^{-\frac{2}{d}}+N^{\frac{p}{q}-1},
  ~~~~~~~~~~~~~~~p\in(0,\frac{d}{2}),\ \ q\neq \frac{d}{d-p}.
\end{cases}
\end{equation*}
\end{thm}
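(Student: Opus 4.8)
The plan is to run a Zvonkin-type transformation common to all the processes $X^i$, $X^{i,N}$ and $X^{\dd,i,N}$, using the fixed drift $b^\mu_t(x)=b(x,\mu_t)$ determined by the true law $\mu_t=\L_{X_t}$, and then to estimate the transformed differences by It\^o's formula combined with Krylov-type estimates and the standard moment estimates for empirical measures. First I would introduce $u^{\ll,\mu}$ solving \eqref{A10} with the properties \eqref{A20}--\eqref{uuu}, so that $\theta^{\ll,\mu}_t(x)=x+u^{\ll,\mu}_t(x)$ is a bi-Lipschitz diffeomorphism with $\|\nn u^{\ll,\mu}\|_{T,\8}\le\ff12$; consequently $|X_t^i-X_t^{\dd,i,N}|^p$ is comparable (up to constants) to $|\theta^{\ll,\mu}_t(X_t^i)-\theta^{\ll,\mu}_t(X_t^{\dd,i,N})|^p$. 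Applying It\^o's formula as in \eqref{E-X} to each of $\theta^{\ll,\mu}_t(X_t^i)$ and $\theta^{\ll,\mu}_t(X_t^{\dd,i,N})$, the leading $\d L_t^i$ martingale terms cancel exactly (they share the same driving noise), leaving a drift remainder, a compensated-jump remainder, and — this is the new feature compared with Theorem \ref{uso} — a discretization remainder coming from evaluating $b$ at $X^{\dd,i,N}_{t_\dd}$ and $\hat\mu^{\dd,N}_{t_\dd}$ rather than at time $t$.

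The estimates then split into three groups. The martingale (compensated Poisson) term is handled exactly as in \eqref{5-Lambda_1} using the estimate from \cite{HL}, bounded by $C\int_0^t\E\sup_{[0,r]}|X^i_s-X^{\dd,i,N}_s|^p\,\d r$. The "regular" drift difference $\nn\theta^{\ll,\mu}\cdot(b^{\mu}_t-b^{\mu}_t)$-type terms and the $\ll u^{\ll,\mu}$ term are Lipschitz-type and produce the same Gronwall-friendly bound, just as in \eqref{5-Lambda_2}--\eqref{5-Lambda_4}. The genuinely new term is the discretization error
\begin{align*}
\int_0^t\big|\nn\theta^{\ll,\mu}_s(X^{\dd,i,N}_s)\big(b(X^{\dd,i,N}_{s_\dd},\hat\mu^{\dd,N}_{s_\dd})-b(X^{\dd,i,N}_s,\mu_s)\big)\big|\,\d s.
\end{align*}
Here I would insert and subtract $b(X^{\dd,i,N}_s,\hat\mu^{\dd,N}_s)$ and $b(X^{i}_s,\tt\mu^N_s)$, then use {\bf(H2)}: the spatial H\"older increment $|X^{\dd,i,N}_{s_\dd}-X^{\dd,i,N}_s|^\beta$ is controlled, after taking expectations, by $(\E|X^{\dd,i,N}_{s_\dd}-X^{\dd,i,N}_s|^{q})^{\beta/q}$; since between $s_\dd$ and $s$ the process moves by a bounded drift increment of order $\dd$ plus an $\alpha$-stable increment of order $\dd^{1/\alpha}$, this is of order $\dd^{\beta/\alpha}$, which yields the $\dd^{p\beta/\alpha}$ rate. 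The Wasserstein increment $\W_\kappa(\hat\mu^{\dd,N}_{s_\dd},\mu_s)$ is split as $\W_\kappa(\hat\mu^{\dd,N}_{s_\dd},\hat\mu^{\dd,N}_s)+\W_\kappa(\hat\mu^{\dd,N}_s,\hat\mu^N_s)+\W_\kappa(\hat\mu^N_s,\tt\mu^N_s)+\W_\kappa(\tt\mu^N_s,\mu_s)$; the first gives another $\dd^{\beta/\alpha}$-type term, the second and third are bounded pathwise by averages of $|X^{\dd,j,N}_\cdot-X^{j,N}_\cdot|$ and $|X^{j,N}_\cdot-X^j_\cdot|$ (so, by exchangeability, re-enter the Gronwall loop), and the last is the quantitative law-of-large-numbers term for i.i.d. samples in $\W_\kappa$, which by \cite{FG}-type bounds produces exactly the $N^{-1/2}$, $N^{-1/2}\log(1+N)$ or $N^{-2/d}$ contributions together with the extra $N^{p/q-1}$ moment term (using $\sup_{t\le T}\E|X^i_t|^q<\infty$, which follows from $\|b\|_\infty<\infty$ and finiteness of $\alpha$-stable moments of order $<\alpha$).

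The main obstacle is the interaction term $\W_\kappa(\hat\mu^N_s,\tt\mu^N_s)\le\frac1N\sum_{j=1}^N|X^{j,N}_s-X^j_s|$: because the particles $X^{j,N}$ are coupled through the empirical measure, one cannot estimate a single index in isolation. The standard remedy, which I would follow, is to use the symmetry (exchangeability) of the system to get $\E\sup_{[0,t]}|X^{i,N}_s-X^i_s|^p$ independent of $i$, so that $\E\big(\frac1N\sum_j|X^{j,N}_s-X^j_s|\big)^p\le\E\sup_{[0,s]}|X^{i,N}_s-X^i_s|^p$ by Jensen and exchangeability; then all the non-LLN terms assemble into $C\int_0^t\big(\text{sup of the three differences up to }r\big)^p\,\d r$ plus the $\dd$- and $N$-dependent driving terms, and Gronwall's lemma (legitimate since all processes have bounded increments, hence finite $p$-th moments) closes the argument. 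One should treat the two approximation steps in order — first $X^{i,N}\to X^i$ (propagation of chaos, giving the $N$-terms) and then $X^{\dd,i,N}\to X^{i,N}$ (the EM step, giving the $\dd$-term) — or else bound all three differences simultaneously inside one Gronwall estimate; I would do the latter for brevity, collecting $|X^i-X^{\dd,i,N}|\le|X^i-X^{i,N}|+|X^{i,N}-X^{\dd,i,N}|$ and running a single inequality. Finally, transferring the bound on the transformed processes back to $|X^i_t-X^{\dd,i,N}_t|^p$ via the bi-Lipschitz property of $\theta^{\ll,\mu}_t$ and taking $\sup_{i\in\mathcal S_N}$ gives the claimed three-case estimate.
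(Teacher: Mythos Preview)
Your proposal is correct and matches the paper's argument in all essentials: the common Zvonkin transform $\theta^{\ll,\mu}$ built from the true law $\mu$, the It\^o decomposition with the compensated-jump estimate from \cite{HL}, the $\alpha$-stable one-step increment bound producing the $\dd^{p\beta/\alpha}$ rate, exchangeability for the particle averages, and the Fournier--Guillin rate \cite{FG} for $\W_p(\tt\mu^N_t,\mu_t)$. The paper carries this out via your first option---two separate lemmas (propagation of chaos $X^i\to X^{i,N}$, then the EM error $X^{i,N}\to X^{\dd,i,N}$) combined by the triangle inequality---rather than the single combined Gronwall you say you would prefer; note that with your four-term Wasserstein splitting through $\hat\mu^N_s$ the right-hand side produces $\E|X^{i,N}-X^i|^p$ and $\E|X^{\dd,i,N}-X^{i,N}|^p$ separately, so a one-quantity Gronwall on $|X^i-X^{\dd,i,N}|$ does not close as written, and one either reverts to the two-step argument (as the paper does) or routes the splitting directly through $\tt\mu^N$ instead of $\hat\mu^N$.
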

%\begin{rem}\label{5-r-r}
%When $\alpha=2$ and $p=2$, the result coincides with the case that the process $L_t$ is a Brownian motion and $b$ is $\beta$-H\"{o}lder continuous (see \cite{BH} for more details).
%\end{rem}
Firstly, under {\bf(H1)}-{\bf(H2)}, the stochastic $N$-interacting particle systems
\eqref{eq4} are strongly wellposed, see Lemma \ref{lem} below.
\begin{lem}\label{lem}
 Assume that {\bf(H1)}   and {\bf(H2)}
hold. Then for any $\F_0$-measurable random variable $X_0$ with $\mathscr{L}_{X_0}\in  \scr
P_p$ for some $p\in(0,\alpha)$, \eqref{eq4} admits a strong solution satisfying 
%$$\sup_{i\in\mathcal {S}_N}\E\Big(\sup_{0\le t \le
%T}|X_t^{i,N}|^p\Big)<\infty.$$
$$\sup_{i\in\mathcal {S}_N}\E|X_t^{i,N}|^p<\infty,\ \ t>0.$$
\end{lem}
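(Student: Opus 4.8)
The plan is to read the $N$-particle system \eqref{eq4} as a single, \emph{distribution-free} SDE on the product space $\R^{dN}$ and then to invoke the classical wellposedness theory of \cite{P} there (the same result already used for \eqref{class} in Remark \ref{wr}, only now on $\R^{dN}$ and with a time-independent drift). Write $\mathbf X_t=(X_t^{1,N},\dots,X_t^{N,N})$, $\mathbf L_t=(L_t^1,\dots,L_t^N)$ and define $B:\R^{dN}\to\R^{dN}$ by $B(\mathbf x)_i=b\bigl(x^i,\ff1N\sum_{j=1}^N\dd_{x^j}\bigr)$ for $\mathbf x=(x^1,\dots,x^N)\in\R^{dN}$. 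Then \eqref{eq4} is precisely the equation $\d\mathbf X_t=B(\mathbf X_t)\,\d t+\d\mathbf L_t$ with $\mathbf X_0=(X_0^1,\dots,X_0^N)$, and a (pathwise unique) strong solution of the latter is, componentwise, a strong solution of \eqref{eq4}, and conversely.

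Two facts must be checked so that \cite{P} applies on $\R^{dN}$. First, the noise is a non-degenerate symmetric $\alpha$-stable process on $\R^{dN}$: its L\'evy measure is the sum of the pushforwards of $\nu$ under the $N$ canonical embeddings $\R^d\hookrightarrow\R^{dN}$, which is again of the form recalled in Section 2, with a finite symmetric spectral measure supported on the $N$ coordinate blocks of the unit sphere of $\R^{dN}$; its symbol is $\Psi_N(u)=\sum_{i=1}^N\Psi(u^i)$ for $u=(u^1,\dots,u^N)$. By {\bf(H1)} and the subadditivity of $t\mapsto t^{\alpha/2}$ (valid since $\alpha\in(1,2)$, so $\alpha/2<1$),
\[
\Psi_N(u)\ \ge\ C_\alpha\sum_{i=1}^N\bigl(|u^i|^2\bigr)^{\alpha/2}\ \ge\ C_\alpha\Bigl(\sum_{i=1}^N|u^i|^2\Bigr)^{\alpha/2}\ =\ C_\alpha|u|^\alpha,\qquad u\in\R^{dN},
\]
so the analogue of {\bf(H1)} holds on $\R^{dN}$. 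Second, $B$ is bounded and $\beta$-H\"older: boundedness is clear with $\|B\|_\infty\le\sqrt N\,\|b\|_\infty$, while the matching coupling $\ff1N\sum_j\dd_{(x^j,y^j)}$ gives $\W_\kappa\bigl(\ff1N\sum_j\dd_{x^j},\ff1N\sum_j\dd_{y^j}\bigr)\le\max_j|x^j-y^j|\le|\mathbf x-\mathbf y|$, so by {\bf(H2)} and the power-mean inequality $\sum_i|x^i-y^i|^{2\beta}\le N^{1-\beta}|\mathbf x-\mathbf y|^{2\beta}$,
\[
|B(\mathbf x)-B(\mathbf y)|^2\ \le\ 2K^2\Bigl(N\,|\mathbf x-\mathbf y|^2+N^{1-\beta}|\mathbf x-\mathbf y|^{2\beta}\Bigr);
\]
together with $\|B\|_\infty<\infty$ this yields $|B(\mathbf x)-B(\mathbf y)|\le C_N|\mathbf x-\mathbf y|^\beta$ for all $\mathbf x,\mathbf y$ (treat $|\mathbf x-\mathbf y|\le 1$ and $|\mathbf x-\mathbf y|>1$ separately). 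Since the condition $2\beta+\alpha>2$ is preserved, \cite{P} applies and $\d\mathbf X_t=B(\mathbf X_t)\,\d t+\d\mathbf L_t$ has a unique strong solution for every $\F_0$-measurable initial value, which gives the existence of a strong solution $(X^{i,N})_{i\in\mathcal S_N}$ of \eqref{eq4}.

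For the moment bound, \eqref{eq4} and $\|b\|_\infty<\infty$ give, for $t\in[0,T]$,
\[
\sup_{0\le t\le T}|X_t^{i,N}|\ \le\ |X_0^i|+T\|b\|_\infty+\sup_{0\le t\le T}|L_t^i|,
\]
and for $p\in(0,\alpha)$ one has $\E\sup_{0\le t\le T}|L_t^i|^p<\infty$ (an $\alpha$-stable process has finite $p$-th moments for $p<\alpha$), while $\E|X_0^i|^p=\E|X_0|^p<\infty$ because $\L_{X_0}\in\scr P_p$; hence $\sup_{i\in\mathcal S_N}\E\sup_{0\le t\le T}|X_t^{i,N}|^p<\infty$, which in particular gives the asserted bound for each fixed $t>0$. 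The argument is essentially bookkeeping once one observes that the $N$-particle system is a \emph{classical} SDE on $\R^{dN}$; the only point deserving a line of justification — what I would flag as the main obstacle — is the first fact above: the product noise $\mathbf L$ has a spectral measure that looks degenerate (it charges only the $N$ coordinate blocks of the sphere), yet the displayed lower bound on $\Psi_N$ shows it is non-degenerate in exactly the sense required by \cite{P}, so the classical theory applies verbatim.
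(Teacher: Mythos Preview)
Your proof is correct and follows essentially the same route as the paper: rewrite \eqref{eq4} as a single classical SDE on $\R^{dN}$, verify that the product noise $\mathbf L$ is again a symmetric $\alpha$-stable process satisfying {\bf(H1)} (via $\Psi_N(u)=\sum_i\Psi(u^i)$ and subadditivity of $t\mapsto t^{\alpha/2}$), check that the drift $B$ is bounded and $\beta$-H\"older using {\bf(H2)} and the matching coupling for $\W_\kappa$, and then invoke \cite{P}; the moment bound comes from $\|b\|_\infty<\infty$ and $\E|L_t|^p<\infty$ for $p<\alpha$. The only cosmetic difference is that the paper records the drift estimate as $|\hat b(x)-\hat b(x')|\le \hat C_N(|x-x'|+|x-x'|^\beta)$ whereas you absorb the linear term into the H\"older one using boundedness, and your subadditivity argument even gives the same constant $C_\alpha$ where the paper writes a generic $C(N,\alpha)$.
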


\begin{proof}
For $x:=(x_1,\cdots,x_N)^*\in(\R^d)^N$,   $x_i\in\R^d$, set
\begin{equation*}
\tt\mu^N_x:=\ff{1}{N}\sum_{i=1}^N\dd_{x_i}, ~~~\hat b(x):=(b(x_1,\tt\mu^N_x),\cdots,b(x_N,\tt\mu^N_x))^*, \ \
\hat L_t:=(L_t^1,\cdots,L_t^N)^*.
\end{equation*}
Obviously, $(\hat L_t)_{t\ge0}$ is an $Nd$-dimensional L\'{e}vy process. Then, \eqref{eq4} can be reformulated as
\begin{equation}\label{B1}
\d X_t=\hat b(X_t)\d t+\d \hat{L}_t,~~~t\ge0.
\end{equation}
Firstly, $\hat{L}_t$ has symbol $\tilde{\Psi}(u)=\sum_{i=1}^N\Psi(u_i)$ for $u=(u_1,\cdots,u_N)^\ast\in(\R^d)^N$. Clearly, ({\bf H1}) holds for $\tilde{\Psi}(u)$ for some constant $C(N,\alpha)>0$ since $\Psi$ satisfies ({\bf H1}) and the inequality
$$C_\alpha\sum_{i=1}^N|u_i|^\alpha\geq C(N,\alpha)\left(\sum_{i=1}^N|u_i|^2\right)^{\frac{\alpha}{2}}$$
holds for some constant $C(N,\alpha)>0$.
By ({\bf H2}), a straightforward
calculation shows that
\begin{equation}\label{B2}
\begin{split}
|\hat b(x)|\leq
C_{N},~~~x\in(\R^d)^N
\end{split}
\end{equation}
for some constant $C_{N}>0$. Observe that
\begin{equation}\label{W4}\ff{1}{N}\sum_{j=1}^N(\dd_{x_j}\times\dd_{y_j})\in\mathcal
{C}(\tt\mu^N_x,\tt\mu^N_y),~~~~x_j,y_j\in\R,\end{equation} so that we
have
\begin{equation}\label{W3}
\mathbb{W}_\kappa(\tt\mu^N_x,\tt\mu^N_y)\le
\left(\ff{1}{N}\sum_{j=1}^N|x_j-y_j|^\kappa\right)^{\frac{1}{\kappa}}.
\end{equation}
This together with ({\bf H2}) and H\"{o}lder inequality
implies that
\begin{equation}\label{B3}
\begin{split}
|\hat b(x)-\hat b(x')|\le \hat C_{N}\{|x-x'|+|x-x'|^\bb\},
\end{split}
\end{equation}
for some constant $\hat C_{N}>0$. Thus, according to \cite{P},
\eqref{eq4} has a unique strong solution. Finally, the estimate follows from the fact $\E|L_t|^p<\infty$ for any $p\in(0,\alpha)$ and the boundedness of $b$.
\end{proof}

\subsection{Proof of Theorem \ref{th3}}
The proof of Theorem \ref{th3} is based on two  lemmas below, where
the first one is concerned with  propagation of chaos for
McKean-Vlasov SDEs with irregular drift coefficients.
We state it as follows.
\begin{lem}\label{pro3}
Under the assumptions of Theorem \ref{th3}, then for any $T>0$ and $q\in(p,\alpha)$, there
exists a constant $C>0$ depending on $p,d,q,T$ and $\sup_{t\in[0,T]}\E|X_t^i|^q$ such that
\begin{equation*}
\sup_{i\in\mathcal {S}_N}\E\Big(\sup_{0\le t\le
T}|X_t^i-X_t^{i,N}|^p\Big)\le C
\begin{cases}
N^{-\frac{1}{2}}+N^{\frac{p}{q}-1}, ~~~~~~~~~~~~~~~~~p>\frac{d}{2},\ \ q\neq 2p, \\
N^{-\frac{1}{2}}\log (1+N)+N^{\frac{p}{q}-1}, ~~~  p=\frac{d}{2},\ \ q\neq 2p
 \\
 N^{-\frac{2}{d}}+N^{\frac{p}{q}-1},
  ~~~~~~~~~~~~~~~~~p\in(0,\frac{d}{2}),\ \ q\neq \frac{d}{d-p}
 \end{cases}
\end{equation*}
\end{lem}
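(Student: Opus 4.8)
The plan is to run the Zvonkin-transform argument of the proof of Theorem \ref{uso}, but with the pair $(X,Y)$ there replaced by the non-interacting particle $X^i$ and the interacting particle $X^{i,N}$, and to retain the coefficient-difference term \emph{quantitatively}, controlled by the rate at which the empirical measure of the i.i.d.\ copies converges to the McKean--Vlasov marginal. First, by weak uniqueness (Theorem \ref{uwo}) one has $\mathscr{L}_{X_t^i}=\mu_t$ for every $i$, so $X^i$ solves the frozen equation $\d X_t^i=b_t^\mu(X_t^i)\,\d t+\d L_t^i$ with $b_t^\mu(x)=b(x,\mu_t)$, i.e.\ exactly the setting of \eqref{A10}. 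Fixing $\ll$ large enough that \eqref{A10} admits a solution $u^{\ll,\mu}$ with $\|\nn u^{\ll,\mu}\|_{T,\8}\le\ff12$ and \eqref{uuu}, I set $\theta_t^{\ll,\mu}(x):=x+u_t^{\ll,\mu}(x)$, which is bi-Lipschitz with $\ff12|x-y|\le|\theta_t^{\ll,\mu}(x)-\theta_t^{\ll,\mu}(y)|\le\ff32|x-y|$ uniformly in $t\in[0,T]$; it then suffices to bound $\E\sup_{t\le T}|\theta_t^{\ll,\mu}(X_t^i)-\theta_t^{\ll,\mu}(X_t^{i,N})|^p$.

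Next I would apply It\^o's formula exactly as in \eqref{E-X}. For $X^i$ this gives $\d\theta_t^{\ll,\mu}(X_t^i)=\ll u_t^{\ll,\mu}(X_t^i)\,\d t+\d L_t^i$ plus a compensated-Poisson term; for $X^{i,N}$, whose drift is $b(\cdot,\hat\mu_t^N)$ instead of $b_t^\mu$, the PDE leaves the residue $\nn\theta_t^{\ll,\mu}(X_t^{i,N})(b(X_t^{i,N},\hat\mu_t^N)-b(X_t^{i,N},\mu_t))$, so that $\d\theta_t^{\ll,\mu}(X_t^{i,N})=\{\ll u_t^{\ll,\mu}(X_t^{i,N})+\nn\theta_t^{\ll,\mu}(X_t^{i,N})(b(X_t^{i,N},\hat\mu_t^N)-b(X_t^{i,N},\mu_t))\}\,\d t+\d L_t^i$ plus a compensated-Poisson term. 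Subtracting, the $L^i$ terms cancel, and as in \eqref{5-X-X^m-2} I get $|\theta_t^{\ll,\mu}(X_t^i)-\theta_t^{\ll,\mu}(X_t^{i,N})|\le\Lambda_1(t)+\Lambda_2(t)+\Lambda_3(t)$, where $\Lambda_1$ is the jump martingale built from increments of $u^{\ll,\mu}$, $\Lambda_2(t)=\int_0^t\ll|u_s^{\ll,\mu}(X_s^i)-u_s^{\ll,\mu}(X_s^{i,N})|\,\d s$, and $\Lambda_3(t)=\int_0^t|\nn\theta_s^{\ll,\mu}(X_s^{i,N})(b(X_s^{i,N},\hat\mu_s^N)-b(X_s^{i,N},\mu_s))|\,\d s$.

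For $\Lambda_1$ and $\Lambda_2$ the estimates \eqref{5-Lambda_1} and \eqref{5-Lambda_2} carry over verbatim (with $X^i,X^{i,N}$ in place of $X,Y$), giving $\E\sup_{s\le t}(\Lambda_1(s)^p+\Lambda_2(s)^p)\le c\int_0^t\E\sup_{r\le s}|X_r^i-X_r^{i,N}|^p\,\d s$. For $\Lambda_3$, using $\|\nn u^{\ll,\mu}\|_{T,\8}\le\ff12$ and {\bf(H2)}, the integrand is $\le\ff32K\,\W_\kappa(\hat\mu_s^N,\mu_s)$; writing $\tt\mu_s^N=\ff1N\sum_j\dd_{X_s^j}$ I split $\W_\kappa(\hat\mu_s^N,\mu_s)\le\W_\kappa(\hat\mu_s^N,\tt\mu_s^N)+\W_\kappa(\tt\mu_s^N,\mu_s)$. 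The coupling \eqref{W4}--\eqref{W3} gives $\W_\kappa(\hat\mu_s^N,\tt\mu_s^N)^\kappa\le\ff1N\sum_j|X_s^j-X_s^{j,N}|^\kappa$, and since $p\ge\kappa$, Jensen together with exchangeability yields $\E\W_\kappa(\hat\mu_s^N,\tt\mu_s^N)^p\le\E|X_s^i-X_s^{i,N}|^p$. The term $\E\W_\kappa(\tt\mu_s^N,\mu_s)^p$ is the convergence rate of the empirical measure of the i.i.d.\ samples $X_s^1,\dots,X_s^N$ to their common law $\mu_s$; since $\sup_{s\le T}\E|X_s^i|^q<\8$ for $q\in(p,\alpha)$, the Fournier--Guillin estimate (cf.\ \cite{BH}) bounds it, uniformly in $s\in[0,T]$, by the $N$-dependent rate $\vv_N$ appearing in the statement. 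Hence $\E\sup_{s\le t}\Lambda_3(s)^p\le c\int_0^t\E\sup_{r\le s}|X_r^i-X_r^{i,N}|^p\,\d s+C\vv_N$.

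Collecting the three bounds and using the bi-Lipschitz equivalence to return to $|X_t^i-X_t^{i,N}|$, I reach $\E\sup_{s\le t}|X_s^i-X_s^{i,N}|^p\le c\int_0^t\E\sup_{r\le s}|X_r^i-X_r^{i,N}|^p\,\d s+C\vv_N$; since $s\mapsto\E\sup_{r\le s}|X_r^i-X_r^{i,N}|^p$ is finite (boundedness of $b$ and finiteness of the $p$-th moments of the stable process for $p<\alpha$, cf.\ the proof of Lemma \ref{lem}) and, by exchangeability, independent of $i$, Gronwall's inequality closes the argument. The main obstacle will be the quantitative step for $\Lambda_3$, namely importing the Fournier--Guillin rates uniformly on $[0,T]$ within the narrow moment window $p<q<\alpha$, together with the case split according to $p$ versus $d/2$; a subsidiary point is that, since $p<\alpha<2$, the martingale estimate \eqref{5-Lambda_1} for $\Lambda_1$ must be used in its $L^p$ ($p<2$) form of the Kunita/Bichteler--Jacod inequality for Poisson stochastic integrals and relies on $\sup_t\|\nn u_t^{\ll,\mu}\|_{\alpha+\beta-1}\le C\|b\|_\beta$ from \eqref{uuu} and on $2\beta+\alpha>2$, but this is already carried out in \cite{HL} and quoted above.
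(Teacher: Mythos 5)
Your proposal is correct and follows essentially the same route as the paper: the same Zvonkin transform $\theta^{\ll,\mu}_t=\mathrm{id}+u^{\ll,\mu}_t$ built from \eqref{A1}, the same three-term decomposition (jump martingale, $\ll u$ difference, drift residue), the same splitting $\W_\kappa(\hat\mu^N_s,\mu_s)\le\W_\kappa(\hat\mu^N_s,\tt\mu^N_s)+\W_\kappa(\tt\mu^N_s,\mu_s)$ handled respectively by the coupling \eqref{W4}--\eqref{W3} with exchangeability and by the Fournier--Guillin rate \eqref{G1}, followed by Gronwall. No gaps worth noting.
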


\begin{proof}
For any $i\in\mathcal {S}_N$ and $x\in\R^d$, let
$b_t^{\mu^i}(x)=b(x,\mu_t^i)$ and
$b_t^{\hat\mu^N}=b(x,\hat\mu_t^N)$. Then, \eqref{C4} and \eqref{eq4}
can be rewritten respectively  as
\begin{equation*}
\begin{split}
\d X_t^i&=b_t^{\mu^i}(X_t^i)\d t+\d L_t^i,\\
\d X_t^{i,N}&=b_t^{\hat\mu^N}(X_t^{i,N})\d t+\d L_t^i.
\end{split}
\end{equation*}
For  $\ll>0 $, consider the following PDE for
$u^{\ll,\mu^i}:[0,T]\times\R^d\to\R^d$:
\begin{equation}\label{A1}
\partial_tu^{\ll,\mu^i}_t+\L u^{\ll,\mu^i}_t+\nn_{b_t^{\mu^i}}u^{\ll,\mu^i}_t+
b_t^{\mu^i}=\ll u^{\ll,\mu^i}_t,~~~u^{\ll,\mu^i}_T=0,
\end{equation}
where $\L$ is defined in \eqref{L}. Since $\mu^i=\mu$ for any $i\in\mathcal
{S}_N$, there exists large
enough $\ll>0$ independent of $i$, such that \eqref{A1} has a unique solution $u^{\ll,\mu^i} \in C^1([0,T],C_{b}^{\alpha+\beta}\left(\mathbb{R}^{d},\R^d\right))$, which is equal to $u^{\ll,\mu}$. Moreover, \eqref{A20} and \eqref{uuu} hold.
%\begin{equation}\label{A2}
%\|\nn u^{\ll,\mu^i}\|_{T,\8}\le
%\ff{1}{2},
%\end{equation}
%and
%\beg{equation*}
%\lambda\|u\|_{0}+\|\nabla u\|_{T,\alpha+\beta-1}\leq c_{\lambda_{0}}\|b\|_{\beta},
%\end{equation*}
%where $\|\cdot\|_\8$ means the uniform norm.

Applying It\^o's
formula to $\theta^{\ll,\mu^i}_t(x):=x+u^{\ll,\mu^i}_t(x),
x\in\R^d$ yields
\begin{equation}\label{S2}
\begin{split}
\d \theta^{\ll,\mu^i}_t(X_t^i)&=\ll u^{\ll,\mu^i}_t(X_t^i)\d
t+\d L^i_t \\
& \qquad +\int_{\mathbb{R}^{d}\backslash\{0\}} \left[ u^{\ll,\mu^i}_t\left(X_{t-}^i+x \right)-u^{\ll,\mu^i}_t\left(X_{t-}^i \right) \right]\tilde{N}(\d t, \d x),\\
\d
\theta^{\ll,\mu^i}_t(X_t^{i,N})
 &=\{\ll
u^{\ll,\mu^i}_t(X_t^{i,N})+\d L^i_t+ \nn\theta^{\ll,\mu^i}_t(b_t^{\hat\mu^N}-b_t^{\mu^i})(X_t^{i,N})\}\d
t\\
& \qquad \\
& \qquad +\int_{\mathbb{R}^{d}\backslash\{0\}} \left[ u^{\ll,\mu^i}_t \left(X_{t-}^{i,N}+x \right)-u^{\ll,\mu^i}_t\left(X_{t-}^{i,N}\right) \right]\tilde{N}(\d t, \d x).
 \end{split}
\end{equation}
For simplicity, set
$\Lambda^{\ll,i,N}_t=\theta^{\ll,\mu^i}_t(X_t^i)-\theta^{\ll,\mu^i}_t(X_t^{i,N})$. We have
\begin{equation*}
\begin{split}
|\Lambda^{\ll,i,N}_t|&\le
\lambda\int_0^t |u^{\ll,\mu^i}_s(X_s^i)-u^{\ll,\mu^i}_s(X_s^{i,N})|\d s+\int_0^t|(\nn\theta^{\ll,\mu^i}_s(b_s^{\hat\mu^N}-b_s^{\mu^i}))(X_s^{i,N})|\d s\\
&\quad+\Bigg|\int_0^t\int_{\mathbb{R}^{d}\backslash\{0\}}\bigg( \left[ u^{\ll,\mu^i}_s\left(X_{s-}^i+x \right)-u^{\ll,\mu^i}_s\left(X_{s-}^i \right) \right]\\
&\qquad\qquad \qquad-\left[ u^{\ll,\mu^i}_s \left(X_{s-}^{i,N}+x \right)-u^{\ll,\mu^i}\left(X_{s-}^{i,N}\right) \right]\bigg)\tilde{N}(\d s, \d x)\Bigg|\\
&=:I_{1,i}(t)+I_{2,i}(t)+I_{3,i}(t).
\end{split}
\end{equation*}
Completely the same with \eqref{5-Lambda_1}, we have
$$\E\sup_{s\in[0,t]}|I_{3,i}(s)|^p\leq \int_0^t\mathbb{E}\sup_{s\in[0,r]}| X_s^{i}-X_s^{i,N}|^{p}\d r.$$
Next, by assumption {\bf (H2)} and H\"{o}lder inequality, for any $p\geq \kappa$, we obtain
\begin{equation}\label{A4}
\begin{split}
\mathbb{E}\sup_{0\leq s\leq t}|I_{2,i}(s)|^{p}&\le
C_2\E\int_0^t\mathbb{W}_\kappa(\hat\mu^N_s,\tt\mu^N_s)^p+\mathbb{W}_\kappa(\tt\mu^N_s,\mu^i_s)^p\d
s\\
&\le
C_2\int_0^t\{\E\sup_{s\in[0,r]}| X_s^{i}-X_s^{i,N}|^{p}+\E\mathbb{W}_\kappa(\tt\mu^N_r,\mu^i_r)^p\}\d
r\\
&\le
C_2\int_0^t\{\E\sup_{s\in[0,r]}| X_s^{i}-X_s^{i,N}|^{p}+\E\mathbb{W}_p(\tt\mu^N_r,\mu^i_r)^p\}\d
r
\end{split}
\end{equation}
Similarly, we have
\beg{equation}\begin{split}\label{5-Lambda_20}
\mathbb{E}\sup_{0\leq s\leq t}|I_{1,i}(s)|^p
&\leq c_{2}(p,\lambda,T)\int_0^t\mathbb{E}\sup_{0\leq s\leq r}| X_s^{i}-X_s^{i,N}|^{p}\d r.
\end{split}\end{equation}
Thus, we
find that for some constant $C_{2,\ll}>0,$
\begin{equation*}
\E\Big(\sup_{0\le s\le t}|\Lambda^{\ll,i,N}_s|^p\Big)\le
C_{2,\ll}\int_0^t\{\mathbb{E}\sup_{0\leq s\leq r}| X_s^{i}-X_s^{i,N}|^{p}+\E\mathbb{W}_p(\tt\mu^N_r,\mu^i_r)^p\}\d
r.
\end{equation*}
Set $Z_t^{i,N}=X_t^i-X_t^{i,N}$
for convenience. This, together with the facts that $ |Z_t^{i,N}|^p\le
2^p|\Lambda^{\ll,i,N}_t|^p $ due to \eqref{A20},  leads to
\begin{equation*}
\E\Big(\sup_{0\le s\le t}|Z_s^{i,N}|^p\Big)\le
C_{3,\ll}\int_0^t\{\mathbb{E}\sup_{0\leq s\leq r}| X_s^{i}-X_s^{i,N}|^{p}+\E\mathbb{W}_p(\tt\mu^N_r,\mu^i_r)^p\}\d
r
\end{equation*}
for some constant $C_{3,\ll}>0$. On the other hand, according to \cite[Theorem 1]{FG}, for any $q\in(p,\alpha)$,
\begin{equation}\label{G1}\sup_{0\le t\le
T}\E\mathbb{W}_p(\tt\mu^N_t,\mu^i_t)^p\leq C_4
\begin{cases}
N^{-\frac{1}{2}}+N^{\frac{p}{q}-1}, ~~~~~~~~~~~~~~~~~~p>\frac{d}{2},\ \ q\neq 2p, \\
N^{-\frac{1}{2}}\log (1+N)+N^{\frac{p}{q}-1}, ~~~  p=\frac{d}{2},\ \ q\neq 2p
 \\
 N^{-\frac{2}{d}}+N^{\frac{p}{q}-1},
  ~~~~~~~~~~~~~~~~~~p\in(0,\frac{d}{2}),\ \ q\neq \frac{d}{d-p}
 \end{cases}
\end{equation}
holds for some constant $C_4>0$ depending on $p,d,q$ and $\sup_{t\in[0,T]}\mu^i_t(|\cdot|^q)$. 
Hence, due to the boundedness of $X^{i}-X^{i,N}$, the desired assertion follows from Gronwall's inequality. 
\end{proof}
\begin{rem}\label{moment} Noting that for $q\geq \alpha$, $\mu_t^i(|\cdot|^q)=\infty$, the condition in \cite[Theorem
5.8]{CD} does not hold. So we adopt \cite[Theorem 1]{FG} in place of \cite[Theorem
5.8]{CD} used in \cite{BH}.
\end{rem}
The next lemma gives the estimate for $|X_t^{\dd,i,N}-X_{t_\delta}^{\dd,i,N}|$, which is useful in the sequel.
\beg{lem}\label{L-5-t-td} Assume {\bf (H1)} and {\bf (H2)}, then for any $0<p<\alpha$, $t\in[0,T]$,
\begin{equation*}
\sup_{i\in\mathcal {S}_N}\mathbb{E}\left|X_t^{\dd,i,N}-X_{t_\delta}^{\dd,i,N}\right|^p\leq C(p,\nu)\delta^{\frac{p}{\alpha}}
\end{equation*}
holds for some constant $C(p,\nu)$ depending on $p$ and $\nu$.
\end{lem}
\begin{proof}[Proof]
The result follows immediately from \eqref{C3}, the boundedness of $b$, the scaling property of $L$ and $\E |L_t|^p<\infty$ for $p\in(0,\alpha)$.
\end{proof}

\begin{lem}\label{D1}
Under the assumptions of Theorem \ref{th3},  then for any $T>0$ and $q\in(p,\alpha)$, there
exists a constant $C>0$ depending on $p,d,q,T$ and $\sup_{t\in[0,T]}\E|X_t^i|^q$ such that
\begin{equation*}
\sup_{i\in\mathcal {S}_N}\E\Big(\sup_{0\le t\le
T}|X_t^{i,N}-X_t^{\dd,i,N}|^p\Big)\le C
\begin{cases}
\dd^{\frac{p\beta}{\aa}}+N^{-\frac{1}{2}}+N^{\frac{p}{q}-1}, ~~~~~~~~~~~~~~~p>\frac{d}{2},~ q\neq 2p, \\
\dd^{\frac{p\beta}{\aa}}+N^{-\frac{1}{2}}\log (1+N)+N^{\frac{p}{q}-1}, ~p=\frac{d}{2},~q\neq 2p,\\
\dd^{\frac{p\beta}{\aa}}+N^{-\frac{2}{d}}+N^{\frac{p}{q}-1},
  ~~~~~~~~~~~~~~~p\in(0,\frac{d}{2}),~q\neq \frac{d}{d-p}.
\end{cases}
\end{equation*}

\end{lem}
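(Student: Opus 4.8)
Here is how I would prove Lemma~\ref{D1}.

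The plan is to run the Zvonkin argument of Lemma~\ref{pro3} once more, this time comparing the interacting particle $X^{i,N}$ with its Euler--Maruyama version $X^{\dd,i,N}$. Set $Z^{\dd,i,N}_t=X^{i,N}_t-X^{\dd,i,N}_t$ and $\Lambda^{\dd,i,N}_t=\theta^{\ll,\mu^i}_t(X^{i,N}_t)-\theta^{\ll,\mu^i}_t(X^{\dd,i,N}_t)$, where $\theta^{\ll,\mu^i}_t(x)=x+u^{\ll,\mu^i}_t(x)$ and $u^{\ll,\mu^i}=u^{\ll,\mu}$ solves \eqref{A1}. By \eqref{A20}, $\theta^{\ll,\mu^i}_t$ is bi-Lipschitz, so $|Z^{\dd,i,N}_t|\le 2|\Lambda^{\dd,i,N}_t|$; moreover, since $b$ is bounded while $X^{i,N}$ and $X^{\dd,i,N}$ share the driving noise $L^i$, the noise cancels in $X^{i,N}-X^{\dd,i,N}$ and $|X^{i,N}_t-X^{\dd,i,N}_t|\le 2\|b\|_\8 T$, so that $\Phi(t):=\sup_{i}\E\sup_{s\in[0,t]}|Z^{\dd,i,N}_s|^p<\8$ and a Gronwall step will be legitimate at the end. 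It\^o's formula applied to $\theta^{\ll,\mu^i}_t$ along \eqref{eq4} and \eqref{C3}, together with the PDE \eqref{A1} (exactly as in \eqref{S2}), will give
\[
\Lambda^{\dd,i,N}_t=\int_0^t\Big\{\ll\big(u^{\ll,\mu^i}_s(X^{i,N}_s)-u^{\ll,\mu^i}_s(X^{\dd,i,N}_s)\big)+D_s\Big\}\,\d s+M_t,
\]
where $M_t$ is the compensated jump integral of the increments of $u^{\ll,\mu^i}_s$ at $X^{i,N}_{s-}$ versus $X^{\dd,i,N}_{s-}$, and
\[
D_s=\nn\theta^{\ll,\mu^i}_s(X^{i,N}_s)\big(b(X^{i,N}_s,\hat\mu^N_s)-b(X^{i,N}_s,\mu^i_s)\big)-\nn\theta^{\ll,\mu^i}_s(X^{\dd,i,N}_s)\big(b(X^{\dd,i,N}_{s_\dd},\hat\mu^{\dd,N}_{s_\dd})-b(X^{\dd,i,N}_s,\mu^i_s)\big).
\]

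The martingale and $\ll u$ terms would be handled just as $\Lambda_1$ and $\Lambda_2$ in the proof of Theorem~\ref{uso}: the estimate of \cite{HL} behind \eqref{5-Lambda_1} gives $\E\sup_{s\in[0,t]}|M_s|^p\le C\int_0^t\Phi(r)\,\d r$, and $\|\nn u^{\ll,\mu}\|_{T,\8}\le\ff12$ with H\"older's inequality (as in \eqref{5-Lambda_2}) gives $\E\sup_{s\in[0,t]}\big|\int_0^s\ll(u^{\ll,\mu^i}_r(X^{i,N}_r)-u^{\ll,\mu^i}_r(X^{\dd,i,N}_r))\,\d r\big|^p\le C\int_0^t\Phi(r)\,\d r$. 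The real work is $D_s$, and the crucial decision is \emph{not} to telescope it into a difference of the drifts $b(\cdot,\mu^i_s)$: that route produces a $|Z^{\dd,i,N}_s|^\beta$ contribution (as $b$ is only $\beta$-H\"older in space), and the resulting nonlinear Gronwall inequality does not return a vanishing rate. Instead I would estimate $D_s$ by the triangle inequality, using \eqref{A20} (so $\|\nn\theta^{\ll,\mu^i}\|_{T,\8}\le\ff32$) and {\bf(H2)}:
\[
|D_s|\le\ff32 K\,\W_\kappa(\hat\mu^N_s,\mu^i_s)+\ff32 K\big(|X^{\dd,i,N}_{s_\dd}-X^{\dd,i,N}_s|^\beta+\W_\kappa(\hat\mu^{\dd,N}_{s_\dd},\mu^i_s)\big).
\]
Lemma~\ref{L-5-t-td} applied with exponent $p\beta<\aa$ gives $\E|X^{\dd,i,N}_{s_\dd}-X^{\dd,i,N}_s|^{p\beta}\le C\dd^{p\beta/\aa}$, which is the origin of the $\dd^{p\beta/\aa}$ term. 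For the two Wasserstein distances I would insert the empirical measure $\tt\mu^N_s=\ff1N\sum_{j}\dd_{X^j_s}$ of the true particles and use $\W_\kappa(\hat\mu^N_s,\tt\mu^N_s)^\kappa\le\ff1N\sum_j|X^{j,N}_s-X^j_s|^\kappa$ and $\W_\kappa(\hat\mu^{\dd,N}_{s_\dd},\tt\mu^N_s)^\kappa\le\ff1N\sum_j|X^{\dd,j,N}_{s_\dd}-X^j_s|^\kappa$, together with the triangle inequality in the last sum, $\W_\kappa\le\W_p$ (since $\kappa\le p$), Jensen's inequality ($p\ge\kappa$) and exchangeability of the two particle systems; this bounds $\E\W_\kappa(\hat\mu^N_s,\mu^i_s)^p+\E\W_\kappa(\hat\mu^{\dd,N}_{s_\dd},\mu^i_s)^p$ by a constant times $\Phi(s)+\sup_{t\in[0,T]}\E|X^i_t-X^{i,N}_t|^p+\sup_{t\in[0,T]}\E\W_p(\tt\mu^N_t,\mu^i_t)^p+\dd^{p/\aa}$, where the second summand is controlled by Lemma~\ref{pro3} and the third by \eqref{G1} (i.e. \cite[Theorem~1]{FG}), and both are dominated by the $N$-dependent part $R_N$ of the bound in the statement. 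Thus $\E|D_s|^p\le C(\Phi(s)+\dd^{p\beta/\aa}+R_N)$.

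Collecting the pieces, $\Phi(t)\le 2^p\sup_i\E\sup_{s\in[0,t]}|\Lambda^{\dd,i,N}_s|^p\le C\int_0^t\Phi(r)\,\d r+C\big(\dd^{p\beta/\aa}+R_N\big)$, where I have absorbed $\dd^{p/\aa}\le\dd^{p\beta/\aa}$ (valid since $\dd\in(0,\e^{-1})$ and $\beta<1$). Since $\Phi$ is finite on $[0,T]$, Gronwall's inequality then yields $\Phi(T)\le C(\dd^{p\beta/\aa}+R_N)$, which is exactly the asserted estimate, with the three cases inherited verbatim from \eqref{G1}. The only genuinely delicate point is the bookkeeping around $D_s$: one must keep the two ``error'' pieces $\nn\theta^{\ll,\mu^i}_s(X^{i,N}_s)(b(X^{i,N}_s,\hat\mu^N_s)-b(X^{i,N}_s,\mu^i_s))$ and $\nn\theta^{\ll,\mu^i}_s(X^{\dd,i,N}_s)(b(X^{\dd,i,N}_{s_\dd},\hat\mu^{\dd,N}_{s_\dd})-b(X^{\dd,i,N}_s,\mu^i_s))$ separate, rather than forming their pointwise difference through the merely $\beta$-H\"older drift; this is precisely what keeps the dependence on $Z^{\dd,i,N}$ linear, so that a linear Gronwall---and hence the stated rate---goes through.
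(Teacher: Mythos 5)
Your proposal is correct and follows essentially the same route as the paper: the same Zvonkin transform $\theta^{\ll,\mu^i}_t=\mathrm{id}+u^{\ll,\mu^i}_t$, the same three-term decomposition into the $\ll u$ term, the drift-correction term and the compensated jump integral, the same key step of keeping the two drift-error pieces separate (rather than differencing the $\beta$-H\"older drift) so that the estimate stays linear in $Z^{\dd,i,N}$, and the same insertion of the empirical measure $\tt\mu^N$ together with Lemma \ref{L-5-t-td}, Lemma \ref{pro3}, \eqref{G1} and Gronwall. Your explicit remark on why telescoping through $b(\cdot,\mu^i_s)$ would fail is a correct reading of the structural point the paper uses implicitly in \eqref{A100}.
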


\begin{proof} For $x\in\R^d$ and $i\in\mathcal {S}_N$,
let $b^{\hat\mu^{\dd,N}}_{k\dd}(x)=b(x,\hat\mu^{\dd,N}_{k\dd})$ so
that \eqref{C3} can be reformulated as
\begin{equation*}
\d X_t^{\dd,i,N}=b^{\hat\mu^{\dd,N}}_{t_\dd}(X_{t_\dd}^{\dd,i,N})\d
t+\d L_t^i.
\end{equation*}
Let $u^{\ll,\mu^i}$ be the solution to \eqref{A1}. Again applying It\^o's formula to
$\theta^{\ll,\mu^i}_t(x)=x+u^{\ll,\mu^i}_t(x)$ gives that
\begin{equation}\label{S3}
\begin{split}
\d \theta^{\ll,\mu^i}_t( X_t^{\dd,i,N})&=\Big\{\ll u^{\ll,\mu^i}_t(
X_t^{\dd,i,N})  +\nn\theta^{\ll,\mu^i}_t(
X_t^{\dd,i,N})(b^{\hat\mu^{\dd,N}}_{t_\dd}(X_{t_\dd}^{\dd,i,N})-b^{ \mu^i}_t(X_t^{\dd,i,N}))+\d L_t^i\\
&\quad
\int_{\mathbb{R}^{d}\backslash\{0\}} \left[ u^{\ll,\mu^i}_t \left(X_{t-}^{\delta,i,N}+x \right)-u^{\ll,\mu^i}_t\left(X_{t-}^{\delta,i,N}\right) \right]\tilde{N}(\d t, \d x).
\end{split}
\end{equation}
Set
\begin{equation*}
\Theta^{\ll,i,N}_t:=\theta^{\ll,\mu^i}_t(X_t^{i,N})-\theta^{\ll,\mu^i}_t(X_t^{\dd,i,N}),~~~~
 Z_t^{\dd,i,N}:=X_t^{i,N}-X_t^{\dd,i,N}.
\end{equation*}
Then, for any $p\in[\kappa,\alpha)$, from \eqref{S3} and    the second SDE in \eqref{S2}, we
deduce from H\"older's inequality that
\begin{align*}
&\E\Big(\sup_{0\le s\le t} |\Theta^{\ll,i,N}_s|^p\Big)\\&\le
C_{\ll,p,T}\Bigg\{\int_0^t\E\left| u^{\ll,\mu^i}_s(
X_s^{\dd,i,N}) -u^{\ll,\mu^i}_s(
X_s^{i,N})\right|^p\d s\\
&\quad+\int_0^t\E\left|\nn\theta^{\ll,\mu^i}_s(b_s^{\hat\mu^N}-b_s^{\mu^i})(X_s^{i,N})-\nn\theta^{\ll,\mu^i}_s(
X_s^{\dd,i,N})(b^{\hat\mu^{\dd,N}}_{s_\dd}(X_{s_\dd}^{\dd,i,N})-b^{
\mu^i}_s(X_s^{\dd,i,N}))\right|^p\d s\\
&\quad+\E\sup_{r\in[0,t]}\Bigg|\int_0^r\int_{\mathbb{R}^{d}\backslash\{0\}} \Bigg(\left[ u^{\ll,\mu^i}_s \left(X_{s-}^{\delta,i,N}+x \right)-u^{\ll,\mu^i}_s\left(X_{s-}^{\delta,i,N}\right) \right]\\
&\qquad\qquad\qquad\qquad\qquad\qquad-\left[ u^{\ll,\mu^i}_s \left(X_{s-}^{i,N}+x \right)-u^{\ll,\mu^i}\left(X_{s-}^{i,N}\right) \right]\Bigg)\tilde{N}(\d s, \d x)\Bigg|^p\Bigg\}\\
&=:C_{\ll,p,T}\{J_1(t)+J_2(t)+J_3(t)\}
\end{align*}
for some constant $C_{\ll,p,T}>0.$ In what follows, we intend to estimate
$J_i(t), i=1,2,3$, one-by-one. Owing to \eqref{A20} and \eqref{5-Lambda_1}, there exists a
constant $c_1>0$ such that
\begin{equation}\label{L1}
J_1(t)+J_3(t)\le c_1\int_0^t\E \sup_{s\in[0,r]}|Z_s^{\dd,i,N}|^p\d r.
\end{equation}
It remains to estimate $J_2(t)$. By \eqref{A20}, we arrive at
\begin{equation}\label{A100}
\begin{split}
J_2(t)&\le
c_2\int_0^t\{\E\mathbb{W}_\kappa(\mu_s^i,\hat\mu^N_s)^p+\E|X_s^{\dd,i,N}-X_{s_\dd}^{\dd,i,N}|^{p\beta} +\E\mathbb{W}_\kappa(\mu_s^i,\hat\mu^{\dd,N}_{s_\dd})^p\}\d
s\\
&\le
c_3\int_0^t\{\dd^{\frac{p\beta}{\aa}}+\E\mathbb{W}_\kappa(\mu_s^i,\tt\mu^N_s)^p+\E\mathbb{W}_\kappa(\tt\mu^N_s,\hat\mu^N_s)^p
  +\E\mathbb{W}_\kappa(\tt\mu^N_s,\hat\mu^{\dd,N}_{s_\dd})^p\}\d
s\\
\end{split}
\end{equation}
for some constants $c_2,c_3>0$, where we have used Lemma \ref{L-5-t-td}.
On the other hand,  similarly to  \eqref{W3}, we obtain from Lemma \ref{L-5-t-td}
\begin{equation}\label{H2}
\begin{split}
&\E\mathbb{W}_\kappa(\tt\mu^N_t,\hat\mu^N_t)^p
  +\E\mathbb{W}_\kappa(\tt\mu^N_t,\hat\mu^{\dd,N}_{t_\dd})^p\\
  &\le\ff{1}{N}
  \sum_{j=1}^N\{\E|X_t^j-X_t^{j,N}|^p+\E|X_t^j-X_{t_\dd}^{\dd,j,N}|^p\}\\
  &\le C_{1,T}\dd^{\frac{p}{\alpha}}+\E|X_t^i-X_t^{i,N}|^p+c(p)\E|X_t^{i,N}-X_t^{\dd,i,N}|^p
\end{split}
\end{equation}
for some $C_{1,T}>0,$
where in the last display we used the facts that
$(X^j-X^{j,N})_{j\in\mathcal {S}_N}$ and
$(X^j-X^{\dd,j,N})_{j\in\mathcal {S}_N}$ are identically
distributed. Then, plugging \eqref{H2} back into \eqref{A100} gives
that
\begin{equation}\label{L2}
J_2(t) \le
C_{2,T}\int_0^t\{\dd^{\frac{p\beta}{\alpha}}+\E\mathbb{W}_p(\mu_s^i,\tt\mu^N_s)^p+\E|X_s^i-X_s^{i,N}|^p+
\E\sup_{r\in[0,s]}|Z_r^{\dd,i,N}|^p\}\d s
\end{equation}
for some constant $C_{2,T}>0$.
Now, combining \eqref{L1}, \eqref{L2}, we arrive at
\begin{equation*}
\E\Big(\sup_{0\le s\le t} |\Theta^{\ll,i,N}_s|^p\Big)\le C_{4,T}
\int_0^t\{\dd^{\frac{p\beta}{\aa}}+\E\mathbb{W}_p(\mu_s^i,\tt\mu^N_s)^p+\E|X_s^i-X_s^{i,N}|^p+
\E\sup_{r\in[0,s]}|Z_r^{\dd,i,N}|^p\}\d s
\end{equation*}
for some constant $C_{4,T}>0.$ This, together with $
|Z_t^{\dd,i,N}|^p\le 2^p|\Theta^{\ll,i,N}_t|^p $ due to \eqref{A20},
yields
\begin{equation*}
\E\Big(\sup_{0\le s\le t} |Z_s^{\dd,i,N}|^p\Big)\le C_{5,T}
\int_0^t\{\dd^{\frac{p\beta}{\aa}}+\E\mathbb{W}_p(\mu_s^i,\tt\mu^N_s)^p+\E|X_s^i-X_s^{i,N}|^p+
\E\sup_{r\in[0,s]}|Z_r^{\dd,i,N}|^p\}\d s
\end{equation*}
for some constant $C_{5,T>0}$. Consequently, the desired assertion holds true by  applying Gronwall's inequality and
employing Lemma \ref{pro3} and \eqref{G1}.

\end{proof}
\begin{proof}[Proof of Theorem \ref{th3}] Theorem \ref{th3} immediately follows from Lemma \ref{pro3} and Lemma \ref{D1}. 
\end{proof}

%\paragraph{Acknowledgement.} The author would like to thank Professor Feng-Yu Wang for corrections and helpful comments.

\beg{thebibliography}{99}
\bibitem{ABRS}  H. Airachid,     M. Bossy,    C. Ricci,  l. Szpruch,
\emph{New particle representations for ergodic McKean-Vlasov SDEs,}
arXiv:1901.05507.

\bibitem{BHY}J.  Bao, X. Huang,  C. Yuan, \emph{Convergence Rate of Euler--Maruyama Scheme for SDEs
with H\"older--Dini Continuous Drifts, } { J.  Theor. Probab.},
{ 32} (2019),  848--871.

\bibitem{BH} J. Bao, X. Huang \emph{Approximations of Mckean-Vlasov SDEs with Irregular Coefficients,} arXiv:1905.08522.

\bibitem{BMP}
 M. Bauer,   T. Meyer-Brandis,   F.  Proske,  \emph{Strong solutions of mean-field stochastic differential equations with irregular
 drift,} { Electron. J. Probab.},
  23 (2018), paper no. 132, 35 pp.

\bibitem{BLM} R. Buckdahn, J. Li,   J. Ma, \emph{A mean-field stochastic
control problem with partial observations, } {Ann. Appl.
Probab.}, {27} (2017),  3201--3245.

 \bibitem{BLPR} R. Buckdahn,  J. Li,   S. Peng,  C. Rainer,
 \emph{ Mean-field stochastic differential equations and associated PDEs, } {Ann. Probab.}, {45} (2017),  824--878.

\bibitem{CD} R. Carmona,  F. Delarue,   \emph{Probabilistic theory of mean field games with applications I,}
vol. 84 of Probability Theory and Stochastic Modelling, Springer International Publishing,
1st ed., 2017.

%\bibitem{Ch0}P.-E. Chaudru de Raynal,   \emph{Strong existence and uniqueness for stochastic differential equation with H\"{o}lder drift and degenerate noise,} arxiv:1205.6688.

\bibitem{Ch}P.-E. Chaudru de Raynal,  \emph{Strong well-posedness of McKean-Vlasov stochastic differential equation with H\"older
drift, } arXiv:1512.08096v2.

%\bibitem{CST}  J.-F.  Chassagneux,    L.  Szpruch,  A. Tse,  \emph{Weak quantitative propagation of chaos via differential calculus on the space of
%measures, } arXiv:1901.02556.

\bibitem{CM} D. Crisan,  E. McMurray,  \emph{Smoothing properties of
McKean-Vlasov SDEs,}  {Probab. Theory Related Fields}, { 171}
(2018),   97--148.

\bibitem{DEG} G. dos Reis,   S. Engelhardt, G. Smith,   \emph{Simulation of McKean-Vlasov SDEs with super linear
growth,}   arXiv:1808.05530.

\bibitem{EGZ}  A. Eberle,  A. Guillin, R. Zimmer,
\emph{Quantitative Harris--type theorems for diffusions and McKean-Vlasov
processes,} { Trans. Amer. Math. Soc.}, { 371} (2019),
7135--7173.

\bibitem{FG} N. Fournier, A. Guillin, \emph{On the rate of convergence in Wasserstein distance of the empirical measure,} arXiv:1312.2128. 

 \bibitem{GR} I. Gy\"ongy, M.  R\'{a}sonyi,  \emph{A note on Euler approximations for SDEs with H\"older continuous diffusion
 coefficients,}
  { Stochastic Process. Appl.}, {121} (2011),   2189--2200.

\bibitem{HL} X. Huang, Z. Liao, \emph{The Euler-Maruyama method for S(F)DEs with H\"{o}lder drift and $\alpha$-stable noise,} Stoch. Anal. Appl., 36 (2018), 28-39.

\bibitem{HW} X. Huang, F.-Y.  Wang,  \emph{Distribution dependent SDEs
with singular coefficients, Stochastic Process, }
Appl., 129 (2019), 4747-4770.

\bibitem{H1} X. Huang, \emph{Path-Distribution Dependent SDEs with Singular Coefficients,} arXiv:1805.01682.

\bibitem{IW} N.  Ikeda, S. Watanabe,  \emph{Stochastic differential equations and diffusion processes}, North-Holland Mathematical Library, 24.
 North-Holland Publishing Co., Amsterdam-New York; Kodansha, Ltd., Tokyo, 1981.

\bibitem{J} P. Jin, \emph{On weak solutions of SDEs with singular time-dependent drift and driven by stable processes,}  Stoch. Dyn. 18 (2018).

\bibitem{K} H. Kunita, \emph{Stochastic differential equations based on L\'{e}vy processes and stochastic flows of diffeomorphisms,} Birkh\"{a}user Boston, 70 (7):305-373, (2004).

\bibitem{L} E. Lenglart, \emph{Relation de domination entre deux processus,} Annales de l'Institut Henri Poincar\'{e}. Section B. Calcul des Probabilit\'{e}s et Statistique. Nouvelle S\'{e}rie, (2):171-179, (1977).

\bibitem{LM}  J. Li, H.  Min, \emph{Weak solutions of mean-field stochastic
differential equations and application to zero-sum stochastic
differential games,}
 { SIAM J. Control Optim.}, {54} (2016),   1826--1858.

%\bibitem{Mal} F. Malrieu, \emph{Convergence to equilibrium for granular media
%equations and their Euler schemes,} { Ann. Appl. Probab.}, {13}
%(2003),   540--560.

\bibitem{MX} R. Mikulevi\u{C}ius, F. H. Xu, \emph{On the rate of convergence of strong Euler approximation for SDEs driven by Levy processes,}  arXiv:1608.02303.

%\bibitem{NT2} H-L. Ngo, D. Taguchi, \emph{Strong rate of convergence for the Euler-Maruyama approximation of stochastic differential equations with irregular coefficients,} Mathematics of Computation, 85(300):1793¨C1819, %(2016).
%\bibitem{MV1} Yu. S. Mishura, A. Yu. Veretennikov, \emph{Existence and uniqueness theorems for solutions of McKean¨CVlasov stochastic equations,} arXiv:1603.02212.

\bibitem{Mc} H. P. McKean, Jr.,  \emph{A class of Markov processes associated with nonlinear parabolic
equations,} { Proc. Nat. Acad. Sci. U.S.A.}, {56} (1966),
1907--1911.

\bibitem{MS}  S. Mehri,       W. Stannat,  \emph{Weak solutions to Vlasov--McKean equations under
Lyapunov--typeconditions,  }   arXiv:1901.07778.

\bibitem{MV} Yu. S. Mishura, A. Yu. Veretennikov, \emph{Existence and uniqueness theorems for solutions of McKean-Vlasov stochastic equations,} arXiv:1603.02212.

\bibitem{NT2}  H.-L. Ngo, D. Taguchi, \emph{ Strong rate of convergence for the
Euler-Maruyama approximation of stochastic differential equations
with irregular coefficients,} { Math. Comput.},{ 85} (2016),
1793--1819.
\bibitem{PT} O. M. Pamen, D. Taguchi, \emph{Strong rate of convergence for the Euler-Maruyama approximation of SDEs with H\"{o}lder continuous drift coefficient,}  arXiv:1508.07513.

\bibitem{P} E. Priola, \emph{Pathwise uniqueness for singular SDEs driven by stable processes,} Osaka Journal of Mathematics, 49 (2010), 421-447.

\bibitem{RW} P. Ren, F.-Y. Wang, \emph{Bismut Formula for Lions Derivative of Distribution Dependent SDEs and Applications,} arXiv:1809.06068.

\bibitem{RZ} M. R\"{o}ckner, X. Zhang, \emph{Well-posedness of distribution dependent SDEs with singular drifts,} arXiv:1809.02216.

\bibitem{SZ} A.-S.  Sznitman,  \emph{Topics in propagation of chaos},
Springer, 1991.

\bibitem{So} Y. Song, \emph{Gradient Estimates and Exponential Ergodicity for Mean-Field SDEs with Jumps,} to appear in JTP.

\bibitem{Wangb} F.-Y. Wang,  \emph{Distribution dependent SDEs for Landau type equations,} { Stochastic Process. Appl.}, {128} (2018),   595--621.

\bibitem{W19} F.-Y. Wang,  \emph{Ergodicity and Feyman-Kac Formula for Space-Distribution Valued Diffusion
Processes,} arXiv:1904.06795.

\bibitem{Y} L. Yan, \emph{The Euler scheme with irregular coefficients}, Annals of Probability, 30 (2002), 1172-1194.

\bibitem{Z2} X. Zhang, \emph{A discretized version of Krylov's estimate and its applications,} arXiv:1909.09976.

\bibitem{Z1} X. Zhang, \emph{Stochastic differential equations with Sobolev drifts and driven by $\alpha$-stable processes,} Ann I H Poinc\'{a}re-PR, 49(2013), 1057-1079.

\bibitem{ZV} A. K. Zvonkin,  \emph{A transformation of the phase space of a diffusion process that removes the drift,}  Math. Sb. 93 (135)(1974).
\end{thebibliography}
\end{document}